\documentclass{cmslatex}
\usepackage[paperwidth=7in, paperheight=10in, margin=.875in]{geometry}
 \usepackage[backref,colorlinks,linkcolor=red,anchorcolor=green,citecolor=blue]{hyperref}
\usepackage{amsfonts,amssymb}
\usepackage{amsmath}
\usepackage{graphicx}
\usepackage{cite}
\usepackage{enumerate}
\sloppy


\usepackage{mathabx}
\usepackage[utf8]{inputenc} 
\usepackage[english]{babel}
\usepackage{amstext}
\usepackage{euscript}
\usepackage{bbm}
\usepackage{mathrsfs}

\usepackage{amstext}
\usepackage{amsfonts}
\usepackage{latexsym}
\usepackage{mathtools} 
\usepackage{enumitem} 
\usepackage{accents}

\usepackage{bm}

\thinmuskip = 0.5\thinmuskip \medmuskip = 0.5\medmuskip
\thickmuskip = 0.5\thickmuskip \arraycolsep = 0.3\arraycolsep

\renewcommand{\theequation}{\arabic{section}.\arabic{equation}}


\newcommand*{\im}{\mathop{}\!\mathrm{i}}
\newcommand*{\e}{\mathop{}\!\mathrm{e}}

\newcommand{\rd}{\mathrm{d}}

\newcommand{\mP}{\mathcal{P}}

\newcommand{\bR}{\mathbb{R}}

\newcommand{\Sd}{\mathbb{S}}
\newcommand{\D}{\mathcal{D}_L}

\newcommand{\fe}{f_{N}}
\newcommand{\gn}{g_{N}}

\newcommand{\n}{\bm{\mathrm{n}}}
\newcommand{\kk}{\bm{\mathrm{k}}}
\newcommand{\lb}{\bm{\mathrm{l}}}
\newcommand{\m}{\bm{\mathrm{m}}}
\newcommand{\ib}{\bm{\mathrm{i}}}
\newcommand{\jb}{\bm{\mathrm{j}}}


   \allowdisplaybreaks
\begin{document}
 \title{Convergence of the Fourier-Galerkin spectral method for the Boltzmann equation with uncertainties\thanks{Received date, and accepted date (The correct dates will be entered by the editor).}}


          \author{Liu Liu\thanks{Department of Mathematics, The Chinese University of Hong Kong, Shatin, Hong Kong, People's Republic of China, (lliu@math.cuhk.edu.hk).}
          \and Kunlun Qi \thanks{{School of Mathematics, University of Minnesota--Twin Cities, Minneapolis, MN 55455 USA, (kqi@umn.edu).}}}

         \pagestyle{myheadings} \markboth{Boltzmann equation with uncertainties}{L.~Liu and K.~Qi} 
         \maketitle

          \begin{abstract}
               It is well-known that the Fourier-Galerkin spectral method has been a popular approach for the numerical approximation of the deterministic Boltzmann equation with spectral accuracy rigorously proved. In this paper, we will show that such a spectral convergence of the Fourier-Galerkin spectral method also holds for the Boltzmann equation with uncertainties arising from both collision kernel and initial condition. Our proof is based on newly-established spaces and norms that are carefully designed and take the velocity variable and random variables with their high regularities into account altogether. For future studies, this theoretical result will provide a solid foundation for further showing the convergence of the full-discretized system where both the velocity and random variables are discretized simultaneously.
          \end{abstract}
\begin{keywords}  
Boltzmann equation with uncertainties; Uncertainty quantification; Fourier-Galerkin spectral method; Well-posedness; Convergence and stability.
\end{keywords}

\begin{AMS} 
 Primary 35Q20, 65M12; Secondary  65M70, 45G10
\end{AMS}
          
\section{Introduction}
\label{sec:intro}

\textbf{Background and goals} Kinetic equations have been widely used in vast important areas including rarefied gas, plasma physics, astrophysics and new realms such as semiconductor device modeling \cite{Markowich}, environmental, social and biological sciences \cite{BGK-book}. They describe the non-equilibrium dynamics of a gas or system composed of a large number of particles.
The Boltzmann equation, as a typical example, is used to model different phenomena ranging from rarefied gas flows found in hypersonic aerodynamics, gases in vacuum technologies, or fluids inside microelectromechanical devices \cite{cercignani}, to the description of social and biological phenomena \cite{BGK-book, PT2}. We refer to \cite{DPR,pareschi2001introduction,rjasanow} for recent monographs, collections and surveys.

Due to the significant role the Boltzmann equation plays in the multi-scale hierarchy, in the last decades, the research on kinetic theory has attracted attention not only from the theoretical perspective but also in the numerical field. Derived from $N$-body Newton's equations in a certain sense of limit \cite{cercignani}, the Boltzmann equation typically contains a highly-dimensional and nonlinear integral operator modeling interactions between particles, which brings huge challenges in its numerical approximation. Though the stochastic type method such as the direct simulation Monte Carlo (DSMC) method \cite{Nanbu80,Bird} showed its benefits in efficiency and simplicity of implementation, the deterministic type of methods have developed swiftly in recent decades thanks to the improvement of the computing powers \cite{DP15}. Particularly, to approximate the Boltzmann collision operator the Fourier-Galerkin spectral method provides us a suitable framework \cite{PP96,PR05}, which not only enjoys the spectral accuracy but also can be easily adapted by some accelerated algorithms \cite{BR99, MP06, GHHH17}. In addition to the success of the Fourier-Galerkin spectral method in numerical simulation, the theoretical proof of its spectral convergence is yet to be complete over a long period of time, unless some filters are added to keep the positivity of solution \cite{PR00stability}. 
Recently, motivated by the pioneering work \cite{FM11} of the stability analysis that relies on the "spreading" property of the collision operator, the authors in \cite{HQY21} presented a new framework to show the stability and convergence of the Fourier-Galerkin spectral method, based on a careful $L_v^2$ estimate of the negative part of the solution to the deterministic homogeneous Boltzmann equation, which also enables us to show the convergence when applying the Fourier-Galerkin spectral method to other variant types of the Boltzmann models. 

On the other hand, studying the corresponding uncertainty quantification (UQ) problems is important to assess, validate and improve the underlying models, which is necessary to obtain more reliable predictions and risk assessment. In particular, the collision kernel or scattering cross-section in the Boltzmann integral operator describes the transition rate during particle collisions. Calculating the collision kernel from first principles is extremely complicated and almost impossible for complex particle systems, thus only heuristic approximations are available and it is inevitable that the collision kernel contains uncertainties. Besides, inaccurate measurements of the initial or boundary data, forcing and source terms may bring other sources of uncertainties. Despite the numerous existing research on the Boltzmann and related equations, the study of kinetic models with random uncertainties has only started in recent years \cite{HPY, Jin-ICM, PZ2020, Poette2, RHS, HJ16, LW2017, CMT18}. We refer to the recent collection \cite{JinPareschi}, survey \cite{parUQ} and some relevant works \cite{PIN_book,LeMK,Bio-UQ,NTW,PDL,Schwab}. In particular, we mention a relevant work: in \cite{LJ18} and \cite{DJL19}, the authors provided powerful theoretical tools based on hypocoercivity to study and conduct the local sensitivity analysis for a class of multiscale, non-homogeneous kinetic equations with random uncertainties and their numerical approximations by using the gPC-SG method in random variable discretization. 

Given the motivations above, in this paper, our main purpose is to show that stability and convergence of the Fourier-Galerkin spectral method still hold for the Boltzmann equation with uncertainties that arise from both initial data and collision kernel. 

\textbf{Challenges and main contributions} In contrast with deterministic problems, where the parameters in the collision operator and initial conditions are completely certain, we have to deal with the impact of various sources of random inputs brought into the system. In this paper, we concentrate on the discretization of the numerical system within the deterministic variable domain, for which the Fourier-Galerkin spectral approximation is specifically employed. Notably, our investigation involves a detailed examination of the regularity of the numerical solution in the random space. This analysis serves as a crucial theoretical foundation, facilitating the demonstration of convergent results for the semi-discretized system in both velocity and random variables in our follow-up work.

Hence, considering the challenges caused by the random variable, the usual function space defined only in the velocity variable is far from sufficient. One essential ingredient in our analysis is to develop a new type of Sobolev space that involves the random variable and its higher-order derivatives, based on which the stability and convergence of the solution are analyzed. 

\textbf{Structure of our paper} The rest of this paper is organized as follows. In section \ref{sec:Boltzmann}, we mainly introduce the Boltzmann equation with uncertainties and formulation of the associated numerical system for velocity and random variables. As a preliminary preparation, the newly-established spaces and norms that take into account the uncertainties are presented in section \ref{sec:pre}, where desired assumptions on the collision kernel and initial datum, in addition to estimation of the collision operator with uncertainties, will be clarified. 
In section \ref{sec:main}, by first studying the propagation of the numerical solution in designated norms, we extend the local well-posedness of the numerical solution to any arbitrarily selected time interval. The convergence of the Fourier-Galerkin system with uncertainties is shown in section \ref{sec:conv} and conclusions are finally drawn in section \ref{sec:con}.


\section{The Boltzmann equation and associated numerical system}
\label{sec:Boltzmann}

\subsection{The Boltzmann equation with uncertainties}
\label{subsec:Boltzmann_uncertain}

We consider the spatially homogeneous Boltzmann equation with random inputs, 
\begin{equation}\label{IBE}
\partial_{t} f (t,v,z) = Q(f,f)(t,v,z), \quad t>0, \ v\in \mathbb{R}^d, \, z \in I_{z},
\end{equation}
where $d\geq 2$. The initial datum is assumed
\begin{equation}\label{f0}
f(0,v,z) = f^{0}(v,z), \quad v\in \mathbb{R}^d, \ d\geq 2,\ z \in I_{z},
\end{equation}
where $f=f(t,v,z)$ is the probability density function at time $t$, with velocity $v$ and an $n$-dimensional random variable $z \in I_z$ following the distribution that is assumed known and that characterizes the random inputs. 
Here $Q$ is the collision operator describing binary collisions among particles with a bilinear form given by 
\begin{equation} \label{Qstrong}
Q(g,f)(v, z)=\int_{\bR^d}\int_{\Sd^{d-1}}B(|v-v_*|,\cos \theta, z)[g(v_*',z)f(v',z)-g(v_*,z)f(v,z)] \,\rd{\sigma}\, \rd{v_*}, 
\end{equation}
where $\sigma$ is a vector varying over the unit sphere $\Sd^{d-1}$, and $v'$, $v_*'$ are defined as
\begin{equation}\label{v'vs'}
v'=\frac{v+v_*}{2}+\frac{|v-v_*|}{2}\sigma, \quad v_*'=\frac{v+v_*}{2}-\frac{|v-v_*|}{2}\sigma.
\end{equation}
In this work, we only consider uncertainties coming from 
\begin{itemize}
\item[(i)] the initial datum $ f^{0}(v,z)$;
\item[(ii)] the collision kernel $B \geq 0$, which owns the form: 
\begin{equation}\label{kernel}
B(|v-v_*|,\cos\theta, z) = \Phi(|v-v_*|) b(\cos\theta, z), \qquad \cos\theta =\frac{\sigma\cdot (v-v_*)}{|v-v_*|},
\end{equation}
where the kinetic part $\Phi$ is a non-negative function, the angular part $b$ satisfies the Grad's cut-off assumption, that is, for all $z\in I_z$, 
\begin{equation}\label{cutoff}
\int_{\Sd^{d-1}} b(\cos\theta, z)\, \rd{\sigma}<\infty.
\end{equation}
One typical collision kernel that is widely used is the variable hard sphere (VHS) model \cite{HJ16}:
\begin{equation}\label{VHS}
    B(|v-v_*|,\cos\theta, z) = b(z) |v-v_*|^{\gamma},\quad 0 \leq \gamma \leq 1.
\end{equation}
\end{itemize}

We remark that the stochastic variable $z$ is a collection of random vectors with dimension $d_z$ following the distribution that is assumed known \textit{a priori}. For simplicity, we assume that all its components are mutually independent and already obtained from some dimension reduction technique, e.g., the Karhunen-Loeve expansion \cite{Xiu}. Moreover, the random vectors are not necessarily the same in different sources, i.e., the initial data and collision kernel, in our case. 

\subsection{Formulation of the numerical system}
\label{sec:review}

This paper primarily focuses on investigating the impact of random inputs on the deterministic approximation of the Boltzmann equation. However, this section not only delves into a comprehensive review of the deterministic numerical system for the spatially homogeneous Boltzmann equation, employing a Fourier-Galerkin spectral method in the velocity variable $v$, but also revisits the formulation of the numerical system in the stochastic variable. The latter involves a gPC-based stochastic Galerkin method in the random variable $z$. Additionally, we present a detailed exposition of a semi-discretized system that seamlessly involves both deterministic and stochastic approximations simultaneously, which is elucidated here to pave the way for our forthcoming work.

\subsubsection{A Fourier-Galerkin spectral method for velocity variable}
\label{subsec:spectralvelocity}

We provide a concise overview of the application of the Fourier-Galerkin spectral method to the deterministic Boltzmann equation, drawing primarily from \cite{PR00, HQ20}. For additional insights into the associated fast algorithms, one may refer to \cite{MP06, GHHH17, HM19}.

Consider an approximation of \eqref{IBE} on a periodic domain of velocity $\mathcal{D}_L=[-L,L]^d$, 
\begin{equation}\label{ABE}
\left\{
\begin{aligned}
&\partial_{t} f (t,v,z) = Q^{R}(f,f)(t,v,z), \quad t>0, \ v\in \mathcal{D}_L,\\[4pt]
& f(0,v,z)= f^{0}(v,z), 
\end{aligned}
\right.
\end{equation}
where the initial condition $f^0$ is a non-negative periodic function, $Q^{R}$ is the truncated collision operator of \eqref{Qstrong} defined by
\begin{equation}\label{QR}
\begin{split}
Q^R(g,f)(v,z)&=\int_{\mathcal{B}_R}\int_{\Sd^{d-1}}\Phi(|q|)\, b(\sigma\cdot \hat{q},\,z)\left[g(v'_*,z)f(v',z)-g(v-q,z)f(v,z)\right] \rd{\sigma}\, \rd{q}\\[4pt]
&=\int_{\bR^d} \int_{\Sd^{d-1}}\mathbf{1}_{|q|\leq R}\,\Phi(|q|)\, b(\sigma\cdot \hat{q},\,z)\left[g(v'_*,z)f(v',z)-g(v-q,z)f(v,z)\right]\rd{\sigma}\, \rd{q},
\end{split}
\end{equation}
where, after the change of variable $v_* \mapsto q=v-v_*$, the relative velocity $q$ is truncated to a ball $\mathcal{B}_R$ with radius $R$ centered at the origin. Denoting $q=|q|\hat{q}$ with $|q|$ the magnitude and $\hat{q}$ the directional unit vector and using \eqref{v'vs'}, we then re-write the post-collisional velocities $v',\, v_*'$ accordingly, 
\begin{equation}
v'=v-\frac{q-|q|\sigma}{2}, \quad v_*'=v-\frac{q+|q|\sigma}{2}.
\end{equation}
The values of $L$ and $R$ are positive satisfying $L \geq R > 0$,
where, in practice, to avoid aliasing errors, one usually takes
\begin{equation}\label{RL}
R=2S, \quad L\geq \frac{3+\sqrt{2}}{2}S,
\end{equation}
where the support set of $f^0(v,z)$ in $v$ is within $\mathcal{B}_S$ for all $z \in I_z$. We refer to \cite{PR00} for more details justifying this choice of $R$ and $L$ for anti-aliasing purposes.

We now seek a truncated Fourier series expansion of $f$ given as
\begin{equation}
f(t,v,z) \approx f_N(t,v,z) = \sum\limits_{|\n| = 0}^{N} f_{\n}(t,z) \Phi_{\n}(v)  \in \mathbb{P}_N \quad \text{with} \quad   \Phi_{\n}(v): = \e^{\im \frac{\pi}{L}\n\cdot v}, 
\end{equation}
where $N$ is a non-negative integer, $\n = (n_1,\dots,n_d)$ is a multi-index with $|\n| = n_1+\dots+n_d$, and the space
\begin{equation*}
\mathbb{P}_N=\text{span} \left\{ \Phi_{\n}(v) = \e^{\im \frac{\pi}{L} \n \cdot v} \, \Big| \, 0 \leq |\n| \leq N \right\} 
\end{equation*}
is equipped with the inner product in the velocity space: 
\begin{equation*}
\langle f(t,\cdot,z), g(t,\cdot,z) \rangle_{\D} = \frac{1}{(2L)^{d}}\int_{\D}f(t,v,z) \, \bar{g}(t,v,z) \,\rd v.
\end{equation*}
Substituting $f_N$ into \eqref{ABE} and conducting the Galerkin projection onto $\mathbb{P}_N$, one gets
\begin{equation} \label{PFS}
\left\{
\begin{aligned}
&\partial_{t} f_N(t,v,z) = \mP_N Q^{R}(f_N,f_N)(t,v,z), \quad t>0, \ v\in \mathcal{D}_L,\\[4pt]
& f_N(t=0,v,z)=f_{N}^{0}(v,z). 
\end{aligned}
\right.
\end{equation}
Here $\mP_N$ is the projection operator defined by: 
\begin{equation}\label{proj}
\mP_N g(t,v,z) = \sum_{|\n|=0}^{N} g_{\n}(t,z) \Phi_{\n}(v), \quad g_{\n}(t,z) = \langle g(t,\cdot,z), \Phi_{\n}(\cdot) \rangle_{\D},
\end{equation}
for any suitable function $g$. As a reasonable approximation of $f^0$, $f_N^0\in \mathbb{P}_N$ is set to be the initial data to the numerical system. More discussion on the initial condition will be given in subsection \ref{subsec:initial}. 

We write out the equation satisfied by each Fourier mode of \eqref{PFS}, 
\begin{equation}\label{FS}
\left\{
\begin{aligned}
&\partial_{t} f_{\n}(t,z) = Q^{R}_{\n}(f_{N},f_{N})(t,z), \quad  0 \leq |\n| \leq  N, \\[4pt]
& f_{\n}(t=0,z)= f^{0}_{\n}(z),
\end{aligned}
\right.
\end{equation}
with
\begin{equation*}
Q_{\n}^R:=\langle Q^R(f_N,f_N)(t,\cdot,z), \Phi_{\n}(\cdot) \rangle_{\D}, \quad f^0_{\n}(z):=\langle f_N^0(\cdot,z), \Phi_{\n}(\cdot)\rangle_{\D}.
\end{equation*}
By the definition \eqref{QR} and orthogonality of the Fourier basis, one derives that
\begin{equation} \label{sum}
Q_{\n}^R  = \sum\limits_{\substack{|\lb|,|\m|=0 \\ \lb+\m=\n}}^{N} G(\lb,\m)f_{\lb}f_{\m},
\end{equation}
where the weight $G$ is given by
\begin{equation}\label{GG}
\begin{split}
G(\lb,\m) & = \int_{\mathcal{B}_{R}}\int_{\Sd^{d-1}}\Phi(|q|)b(\sigma\cdot \hat{q}) \left[ \e^{-\im \frac{\pi}{2L}(\lb+\m)\cdot q +\im \frac{\pi}{2L}|q|(\lb-\m)\cdot \sigma} - \e^{-\im \frac{\pi}{L}\m\cdot q} \right] \rd\sigma\,\rd q\\[4pt]
& =\int_{\mathcal{B}_{R}}\e^{-\im \frac{\pi}{L}\m \cdot q}\left[\int_{\Sd^{d-1}}\Phi(|q|)b(\sigma\cdot \hat{q})(\e^{\im \frac{\pi}{2L}(\lb+\m)\cdot (q-|q|\sigma)}-1)\, \rd\sigma\right]\rd q. 
\end{split}
\end{equation}
The second equality presented above is derived by switching two variables $\sigma \leftrightarrow \hat{q}$ in the gain part of $G(\lb,\m)$. It is noteworthy that, within the framework of the direct Fourier spectral method, the computation of $G(\lb,\m)$ may only involve certain slow algorithms as well as require huge storage $O(N^{2d})$ to save. However, it is crucial to emphasize that this computation is a precomputed process, undertaken only once for a given collision kernel. Additionally, for a specific category of collision kernels, such as the VHS model \cite{GHHH17}, the weighted function $G(\lb,\m)$ can be simplified to a more concise form, facilitating easier evaluation.


\subsubsection{A gPC based stochastic Galerkin method for random variable}
\label{subsec:GalerkinStochastic}


We give a brief introduction to the implementation of the gPC based stochastic Galerkin method for the Boltzmann equation with uncertainties, mainly following \cite{HJ16, LJ18}.
In the generalized polynomial chaos approach in the stochastic Galerkin (referred by gPC-SG) framework \cite{Xiu,XK02}, one seeks the solution by the $K$-th order expansion as follows
\begin{equation}\label{fK}
	f(t,v,z) \approx \sum_{|\kk|= 0}^{K} f^{\kk}(t,v)\Psi^{\kk}(z) := f^{K}(t,v,z) \in \mathbb{P}^K,
\end{equation}
where $ \kk = (k_{1},...,k_{d_{z}}) $ is a multi-index with $ |\kk| = k_{1}+...+k_{d_{z}}$ and $\Psi^{\kk}(z)$ are the orthogonal gPC basis functions satisfying
\begin{equation*}
\int_{I_{z}} \Psi^{\ib}(z) \, \Psi^{\jb}(z)  \, \pi(z) \,\rd z = \delta_{\ib\jb}, \quad 0 \leq |\ib|, |\jb| \leq K. 
\end{equation*}
with $\pi(z)$ being the the probability distribution function of $z$.\\
Here the space 
 $\mathbb{P}^K := \text{Span} \left\{ \Psi^{\kk}(z) \, \Big|\, 0 \leq |\kk| \leq K \right\}$
is equipped with the inner product in the random space
\begin{equation*}
	\langle f(t,v,\cdot), \bar{g}(t,v,\cdot) \rangle_{I_{z}} = \int_{I_{z}}  f(t,v,z) g(t,v,z) \pi(z) \,\rd z.
\end{equation*}
Define $\mP^K$ as the projection operator in the random space: 
\begin{equation*}\label{projK}
	\mP^K g(t,v,z)=\sum_{|\kk|=0}^{K} g^{\kk}(t,v) \Psi^{\kk}(z), \quad g^{\kk}(t,v)=\int_{I_{z}} g(t,v,z) \Psi^{\kk}(z) \pi(z) \,\rd z, 
\end{equation*}
for any suitable function $g$. By inserting \eqref{fK} into \eqref{IBE} and performing the standard projection, one obtains the gPC-SG system
\begin{equation} \label{FSK}
	\left\{
	\begin{aligned}
		&\partial_{t} f^{\kk}(t,v) = Q^{\kk}(f^{K},f^{K})(t,v), \quad  0 \leq |\kk| \leq  K, \\[4pt]
		& f^{\kk}(t=0,v)= f^{0,\kk}(v),
	\end{aligned}
	\right.
\end{equation}
with
\begin{equation*}
	Q^{\kk}(f^{K},f^{K}):=\langle Q(f^K,f^K)(t,v,\cdot), \Psi^{\kk}(\cdot) \rangle_{I_{z}}, \quad f^{0,\kk}(v):=\langle f^K(0,v,\cdot), \Psi^{\kk}(\cdot) \rangle_{I_{z}}.
\end{equation*}

For a certain class of collision kernel owning the form \eqref{kernel}, $Q^{\kk}$ can be represented as
\begin{equation}
Q^{\kk}(t,v) = \sum_{|\ib|,|\jb| = 0}^{K} S_{\kk\ib\jb} \int_{\bR^d}\int_{\Sd^{d-1}}\Phi(|v-v_*|) \left[ f^{\ib}(v')f^{\jb}(v'_*)-f^{\ib}(v)f^{\jb}(v_*) \right] \,\rd\sigma \, \rd v_*
\end{equation}
with
\begin{equation}\label{S_kij}
S_{\kk\ib\jb} := \int_{I_{z}} b(z) \Psi^{\kk}(z) \Psi^{\ib}(z) \Psi^{\jb}(z) \pi(z) \,\rd z,
\end{equation}
where VHS collision kernel is taken as an example \cite{HJ16}, i.e., $b(\cos\theta, z) = b(z)$. Similar to the weight function $G(\lb,\m)$ as in \eqref{GG}, $S_{\kk\ib\jb}$ can be pre-computed and restored in advance as well, nevertheless, the evaluation of $Q^{\kk}(t,v)$ is by no means easy. In fact, if one takes a direct method without any fast algorithm, for each $t$ and $\kk$, it would lead to the huge computational cost in $O(N^2_{K}N^{d-1}_{\sigma}N^{2d}_v)$ with the binomial coefficient $N_K = \binom{K+n}{K}$, the number of discretized points in each angular direct $N_{\sigma}$, and the number of points in each velocity dimension $N_v$. 

For more details and related fast algorithm about applying the gPC based stochastic Galerkin method for the Boltzmann equation with uncertainties, we refer the readers to \cite{HJ16, LJ18} and the references therein.

\subsubsection{A discretized system in both velocity and random variables}
\label{subsec:coupled}

Now we are prepared to employ the Fourier-spectral expansion in the velocity variable and the gPC-based stochastic-Galerkin method \cite{Xiu, XK02} in the random space simultaneously, i.e., we seek an approximated solution in the following form: 
\begin{equation}\label{fNK}
	f(t,v,z) \approx \sum_{|\kk|= 0}^{K} \sum_{|\n|= 0}^{N} f^{\kk}_{\n}(t)\Phi_{\n}(v) \Psi^{\kk}(z) := f_{N}^{K}(t,v,z). 
\end{equation}
Inserting into \eqref{IBE} and conducting projections onto the space $\mathbb{P}_N$ and $\mathbb{P}^K$ successively yields
\begin{equation} \label{PFSNK}
	\left\{
	\begin{aligned}
		&\partial_{t} f_{N}^{K}(t,v,z) = \mP_N^K Q^{R}(f_N^K,f_N^K)(t,v,z), \quad t>0, \ v\in \mathcal{D}_L,\ z \in I_{z}, \\[4pt]
		& f_N^K(t=0,v,z)=f_{N}^{K}(0,v,z),
	\end{aligned}
	\right.
\end{equation}
where $\mP_N^K$ is the projection operator defined for any function $g$: 
\begin{equation}\label{projNK}
	\mP_N^K g(t,v,z) = \sum_{|\kk|= 0}^{K} \sum_{|\n|= 0}^{N}  g^{\kk}_{\n}(t) \Phi_{\n}(v) \Psi^{\kk}(z), \quad g_{\n}^{\kk}(t) = \Big \langle \langle g(t,\cdot,\cdot), \Phi_{\n}(\cdot) \rangle_{\D}, \Psi^{\kk}(\cdot) \Big\rangle_{I_{z}}. 
\end{equation}

Furthermore, we present this complete discretized (except for the discretization in the temporal space) system in its weak mode: 
\begin{equation} \label{FSNK}
	\left\{
	\begin{aligned}
		&\partial_{t} f^{\kk}_{\n}(t) = Q^{R,\kk}_{\n}(f^{K}_{N},f^{K}_{N})(t), \quad 0 \leq |\n| \leq  N, \,  0 \leq |\kk| \leq  K, \\[4pt]
		& f^{\kk}_{\n}(t=0,v,z)= f^{0,\kk}_{\n},
	\end{aligned}
	\right.
\end{equation}
with
\begin{equation*}
\begin{aligned}
 & Q^{R,\kk}_{\n}(f_{N}^{K},f_{N}^{K}):= \Big \langle \langle Q^{R}(f^{K}_{N},f^{K}_{N})(t,\cdot,\cdot), \Phi_{\n}(\cdot) \rangle_{\D}, \Psi^{\kk}(\cdot) \Big\rangle_{I_{z}}, \\[2pt]
& f^{0,\kk}_{\n}:= \Big \langle \langle f^K_{N}(0,\cdot,\cdot), \Phi_{\n}(\cdot) \rangle_{\D}, \Psi^{\kk}(\cdot) \Big\rangle_{I_{z}}.
\end{aligned}
\end{equation*}
In particular, we can further deduce the term $Q^{\kk}_{\n}$ as follows
\begin{equation}
	Q^{R,\kk}_{\n}(t) = \sum_{|\ib|,|\jb| = 0}^{K} S_{\kk\ib\jb} \sum\limits_{\substack{|\lb|,|\m|=0 \\[2pt] \lb+\m=\n}}^{N} G(\lb,\m)\, f^{\ib}_{\lb}\, f^{\jb}_{\m}
\end{equation}
with $S_{\kk\ib\jb}$ and $G(\lb,\m)$ given in \eqref{S_kij} and \eqref{GG} respectively. 


\section{Preliminary}
\label{sec:pre}

\subsection{Norms and Notations}
\label{subsec:norms}

We first introduce some norms and notations that will be used throughout the paper. 

For a function $f(t, v, z)$ that is periodic in velocity space $\D$, we define its Lebesgue norm and Sobolev norm with respect to the velocity variable:
\begin{equation*}
\|f(t,\cdot,z)\|^p_{L^p_{v}(\D)}:=\int_{\D} |f(t, v, z)|^p \,\rd{v}, \quad \|f(t, \cdot, z)\|^2_{{H^k_{v}}(\D)}:= \sum_{|\nu|\leq k} \|\partial_v^{\nu} f(t,\cdot,z) \|^2_{L^2_{v}(\D)}. 
\end{equation*}
For all $z \in I_z$, we define the Lebesgue and Sobolev norm for the $z$-derivatives of $f$ up to order $|l|\leq r$:
\begin{equation}
\begin{split}
& | f(t,\cdot,z) |_{L_v^p,r} := \sum_{|l|\leq r} \|\partial_z^l f(t,\cdot,z)\|_{L_v^p(\mathcal{D}_L)} \qquad  | f(t,\cdot,z) |_{H_v^k, r} := \sum_{|l|\leq r} \|\partial_z^l f(t,\cdot,z) \|_{H_v^k(\mathcal{D}_L)}. 
\end{split}
\end{equation}
In addition, one can take $\sup_{z \in I_z}$ above and define the following norms: 
\begin{equation}
	\begin{split}
		& \| f(t,\cdot,\cdot) \|_{L_v^p,r} := \sum_{|l|\leq r} \|\partial_z^l f(t,\cdot,\cdot)\|_{L_{v,z}^{p,\infty}} \qquad  \| f(t,\cdot,\cdot) \|_{H_v^k, r} := \sum_{|l|\leq r} \|\partial_z^l f(t,\cdot,\cdot) \|_{H_{v,z}^{k,\infty}}. 
	\end{split}
\end{equation}

Similar to the deterministic case, for a function $f(v, z)$ and each $z \in I_z$, 
the positive and negative parts are defined by 
\begin{equation}
f^+(v, z)=\max\limits_{v\in\D}\{ f(v, z), 0 \}, \quad 
f^-(v, z)=\max\limits_{v\in\D}\{ -f(v, z),0 \},
\end{equation}
so that $f=f^+-f^-$ and $|f|=f^++f^-$.

\subsection{Assumptions and known results}
\label{subsec:initial}

In this subsection, we introduce some basic assumptions that will be used in the following proof, especially pertaining to the collision kernel and initial conditions. Additionally, we elucidate known results, such as the mass conservation inherent in the Fourier-Galerkin spectral method.

\noindent{\bf Basic assumptions on the collision kernel:} 

\begin{itemize}

\item[(i)] Consider the collision kernel in the form \eqref{kernel}, with the kinetic part $\Phi$ satisfying
\begin{equation}\label{kinetic}
\left\| \mathbf{1}_{|q|\leq R}\Phi(|q|)\right\|_{L^{\infty}(\D)} < \infty . 
\end{equation}
Notice that the power law hard potentials $\Phi(|q|)=|q|^{\gamma}$ ($0\leq \gamma\leq 1$) and the "modified" soft potentials $\Phi(|q|)=(1+|q|)^{\gamma}$ ($-d<\gamma <0$) all satisfy this condition.


\item[(ii)] For all $z\in I_z$, we assume a uniform bound for all the $z$-derivatives of $b$: 
\begin{equation}\label{Assump_B}
 b(\sigma\cdot\hat{q},\,z) > 0, \qquad
 |\partial_z^k b(\sigma\cdot\hat{q},\,z)| \leq C_b \quad \text{for} \quad
 0\leq |k| \leq r. 
\end{equation}
\end{itemize}

\begin{remark}
We mainly focus on the collision kernel with uncertainties in the angular kernel $b(\cos\theta, z)$ in the work. In addition to the VHS model in \eqref{VHS}, one can approximate the collision kernel $b$ by the Karhunen-Loeve expansion \cite{Loeve1977}, given as
\begin{equation}
    b(\cos\theta,z) \approx b_0(\cos\theta) + \sum_{i=1}^{d_z} b_i(\cos\theta) z_i,
\end{equation}
with $z_1, \cdots, z_{d_z}$ independent random variables that follow the probability density function $\pi(z)$. Here we assume that all $b_i$, $i=1,\cdots,d_z$, are bounded to satisfy \eqref{Assump_B}.

While beyond the scope of this paper, it is noteworthy that our analysis can be similarly extended to the case where the kinetic part $\Phi$ in the collision kernel is assumed uncertain, i.e.,
$ B(|q|,\,\sigma\cdot\hat{q},\,z) = \Phi(|q|,\,z)\, b(\sigma\cdot\hat{q})$, if more stringent conditions can be carefully imposed on $\Phi(|q|,\,z)$ to ensure compatibility with \eqref{B_TZ}.

\end{remark}


\bigskip

\noindent{\bf Basic assumptions on the initial condition:} 

In order to prove the well-posedness and stability of the numerical solution to \eqref{PFS}, we need to restrict to a certain class of initial data. For the initial condition $f^0(v,z)$ to the original problem \eqref{ABE}, we assume it to be non-negative, periodic in the velocity space, and belongs to $L_v^1\cap H_v^1(\D)$ for all $z \in I_z$. For the approximated initial condition $f_N^0(v,z)=\mP_N f^0(v,z) $ to the numerical system \eqref{PFS}, we can show that for all $z \in I_z$ it satisfies the following properties \cite{HQY21}: 
\begin{itemize}
\item[(i)] Mass conservation of the approximation: for all $z \in I_z$,
\begin{equation} \label{con(a)}
\int_{\D} f^{0}_N(v,z)\, \rd v=\int_{\D} f^0(v,z)\, \rd v.
\end{equation}

\item[(ii)] Control of $L^2$ and $H^1$ norms: for all $z \in I_z$ and any integer $N\geq 0$,
\begin{equation} \label{con(b)}
\|f^0_N(\cdot,z)\|_{L_v^2(\D)}\leq \|f^0(\cdot,z)\|_{L_v^2(\D)}, \quad \|f^0_N(\cdot,z)\|_{H_v^1(\D)}\leq \|f^0(\cdot,z)\|_{H_v^1(\D)}.
\end{equation}

\item[(iii)] Control of $L^1$ norm: for all $z \in I_z$, there exists an integer $N_0$ such that for all $N > N_0$,
\begin{equation} \label{con(c)}
\|f_N^0(\cdot,z)\|_{L_v^1(\D)}\leq C \|f^0(\cdot,z)\|_{L_v^1(\D)}.
\end{equation}
where we take $C=2$ in the proof below.

\item[(iv)] $L^2$ norm of $f_N^{0,-}$ can be made arbitrarily small: for all $z \in I_z$ and any $\varepsilon>0$, there exists an integer $N_0$ such that for all $N > N_0$,
\begin{equation} \label{con(d)}
\|f_N^{0,-}(\cdot,z)\|_{L_v^2(\D)} < \varepsilon.
\end{equation}
\end{itemize}

\begin{remark}\label{rmk}
Note that $f_N^0=\mP_N f^0$ is not the only possible initial condition for the numerical solution, any reasonable numerical approximation satisfying the above assumption (i)--(iv) would work indeed. 
\end{remark}

It is known that one of the drawbacks of the Fourier-Galerkin spectral method is that some significant physical properties cannot be automatically preserved such as the positivity of numerical solution. However, one of the key properties -- the mass of the numerical solution $f_N$ -- is conserved for any time, which will provide useful control of $f_N$. To illustrate this, we recall the following preliminary Lemma,

\begin{lemma}\label{lemma:conv}
	The numerical system \eqref{PFS} preserves mass, i.e., 
	\begin{equation} 
	\int_{\D} f_N(t,v,z) \,\rd v = \int_{\D} f^{0}_N(v,z) \,\rd v, \quad \forall z \in I_z. 
	\end{equation}
\end{lemma}

\begin{remark}
    In the numerical system \eqref{PFS}, it is crucial to observe that only the velocity variable $v$ is discretized by the Fourier spectral method; consequently, for fixed $z\in I_z$, the proof is almost the same as the deterministic case in \cite[Lemma 2.1]{HQY21}. However, if both velocity $v$ and random variable $z$ are discriteized simultaneously, the mass conservation is expected to be much more complicated, which will be thoroughly investigated in our forthcoming work dealing with the numerical system \eqref{PFSNK}.
\end{remark}


\subsection{Estimates for the \texorpdfstring{$z$}{z}-derivatives of \texorpdfstring{$Q^{R}$}{QR}}
\label{subsec:QR}


The estimates of the collision operator $Q^R$ with uncertainty will play a key role in the proof for our main results, the gain term and loss term with the kernel satisfying \eqref{kernel} possess different structures and are defined as follows: for all $z \in I_z$,
\begin{equation}
\begin{split}
 Q^{R,+}(g,f)(v,z): &=\int_{\bR^d}\int_{\Sd^{d-1}}\mathbf{1}_{|q|\leq R}\,\Phi(|q|) \,b(\sigma\cdot \hat{q},z)\, g(v'_*,z)f(v',z)  \,\rd{\sigma}\rd{q}, \\
 Q^{R,-}(g,f)(v,z): &=\int_{\bR^d}\int_{\Sd^{d-1}}\mathbf{1}_{|q|\leq R}\,\Phi(|q|)\,b(\sigma\cdot \hat{q},z)\,g(v-q,z)f(v,z) \,\rd{\sigma} \rd{q}\\[4pt]
 & = f(v,z)L^R[g](v,z),
\label{Q_minus}
\end{split}
\end{equation}
where
$$ L^R[g](v,z):= \int_{\bR^d}\int_{\Sd^{d-1}}\mathbf{1}_{|q|\leq R}\,\Phi(|q|)\,b(\sigma\cdot \hat{q},z)\,g(v-q,z)\rd{\sigma}\rd{q}. $$

As a counterpart to the deterministic setting that merely depends on the velocity variable, one gets the estimates for the truncated collision operators $Q^{R,+}(g, f)(\cdot,z)$, $Q^{R,-}(g, f)(\cdot,z)$ and $Q^{R}(g, f)(\cdot,z)$ for all $z\in I_z$. We remark that with the assumption on the collision kernel \eqref{kernel}, the proof holds almost the same, except for the coefficient constants in the estimate bounds. For the reader's convenience, we put it in the Appendix. 

In the next step, we derive estimates for higher-order $z$-derivatives of the truncated collision operators $Q^{R,+}$, $Q^{R,-}$ and $Q^R$. 

\begin{proposition}
\label{QR_Z}
Let the truncation parameters $R$, $L$ satisfy \eqref{RL} and assume the collision kernel $B$ satisfy \eqref{kernel}--\eqref{cutoff} and  \eqref{kinetic}--\eqref{Assump_B}.
For all $z \in I_z$ and for any integer $|l|\leq r$, the $l$-th order $z$-derivative of the collision operators has the following bounds: 
\begin{equation}
\| \partial_z^l Q^{R,-}(g, f)(\cdot,z) \|_{L^p_v(\mathcal{D}_L)} \leq 
C_{R,L,d,r}^{-}(B)\, | g(\cdot,z) |_{L_v^1, r} \, | f(\cdot,z) |_{L_v^p, r}, 
\end{equation}
\begin{equation}
\| \partial_z^l Q^{R,+}(g, f)(\cdot,z) \|_{L^p_v(\mathcal{D}_L)} \leq 
C_{R,L,d,r}^{+}(B)\, | g(\cdot,z) |_{L_v^1, r} \, | f(\cdot,z) |_{L_v^p, r}, 
\end{equation}
\begin{equation}\label{Q_LpZ}
\| \partial_z^l Q^{R}(g, f)(\cdot,z) \|_{L^p_v(\mathcal{D}_L)} \leq 
C_{R,L,d,p,r}(B)\, | g(\cdot,z) |_{L^1_v, r} \, | f(\cdot,z) |_{L^p_v, r}. 
\end{equation}
Furthermore, 
\begin{equation}\label{Q_HkZ}
\| \partial_z^l Q^{R}(g, f)(\cdot,z) \|_{H^k_v(\mathcal{D}_L)} \leq 
C_{R,L,d,k,r}(B)\, | g(\cdot,z) |_{H^k_v, r} \, | f(\cdot,z) |_{H^k_v, r}. 
\end{equation}
\end{proposition}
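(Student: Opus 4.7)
The approach is to reduce everything to the deterministic estimates for $Q^{R,\pm}$ and $Q^R$ stated in the Appendix by means of a Leibniz expansion in the random variable. Since $z$ enters $Q^{R,\pm}$ only through the angular kernel $b(\sigma\cdot\hat{q},z)$ and through the arguments of $g,f$, for any multi-index $|l|\leq r$ one can write
\begin{equation*}
\partial_z^l Q^{R,-}(g,f)(v,z)=\sum_{l_1+l_2+l_3=l}\binom{l}{l_1,l_2,l_3}\int_{\bR^d}\int_{\Sd^{d-1}}\mathbf{1}_{|q|\leq R}\,\Phi(|q|)\,\partial_z^{l_1}b(\sigma\cdot\hat{q},z)\,\partial_z^{l_2}g(v-q,z)\,\partial_z^{l_3}f(v,z)\,\rd\sigma\,\rd q,
\end{equation*}
and an analogous expansion for $Q^{R,+}$ with $\partial_z^{l_2}g(v'_*,z)\,\partial_z^{l_3}f(v',z)$ in place of $\partial_z^{l_2}g(v-q,z)\,\partial_z^{l_3}f(v,z)$. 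The key observation is that \eqref{Assump_B} supplies the uniform pointwise bound $|\partial_z^{l_1}b|\leq C_b$, so each summand has exactly the structure of the deterministic operator, but with the bounded angular kernel $\partial_z^{l_1}b$ and with $g,f$ replaced by their $z$-derivatives.

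The first three estimates then follow at once. Applying the deterministic $L^p_v$ bounds from the Appendix (which depend on the angular part only through an $L^\infty$-type constant) to each summand yields
\begin{equation*}
\|\partial_z^l Q^{R,\pm}(g,f)(\cdot,z)\|_{L^p_v(\D)}\leq C\sum_{l_1+l_2+l_3=l}\binom{l}{l_1,l_2,l_3}\|\partial_z^{l_2}g(\cdot,z)\|_{L^1_v(\D)}\,\|\partial_z^{l_3}f(\cdot,z)\|_{L^p_v(\D)},
\end{equation*}
and after bounding the multinomial coefficients by a constant depending only on $r$, the right-hand side is controlled by $|g(\cdot,z)|_{L^1_v,r}\,|f(\cdot,z)|_{L^p_v,r}$. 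The bound on $\partial_z^l Q^R$ then follows by triangle inequality from $Q^R=Q^{R,+}-Q^{R,-}$.

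For the $H^k_v$ bound the plan is first to commute $\partial_v^\nu$ with $Q^R$. Since the post-collisional velocities $v'$ and $v'_*$ depend linearly on $v$ (with $q$ held fixed), the chain rule together with Leibniz produces the familiar identity
\begin{equation*}
\partial_v^\nu Q^R(g,f)(v,z)=\sum_{\nu_1+\nu_2=\nu}\binom{\nu}{\nu_1}\,Q^R\!\left(\partial_v^{\nu_1}g,\,\partial_v^{\nu_2}f\right)(v,z),
\end{equation*}
in which no derivative falls on $b$, on $\Phi$, or on the truncation $\mathbf{1}_{|q|\leq R}$. Combining this with the Leibniz expansion in $z$ and invoking the $L^2_v$ estimate just established, each of the resulting terms is controlled by $\|\partial_v^{\nu_1}\partial_z^{l_2}g\|_{L^1_v}\,\|\partial_v^{\nu_2}\partial_z^{l_3}f\|_{L^2_v}$; since $\D$ is bounded, $L^2_v(\D)\hookrightarrow L^1_v(\D)$ allows one to upgrade the $L^1_v$ factor to $H^k_v$, and summation over all multi-indices yields \eqref{Q_HkZ}.

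The main technical obstacle I anticipate is making the $v$-derivative commutation rigorous in the periodic setting: one must check that the translation $v_*\mapsto q=v-v_*$ is compatible with periodicity on $\D$ (so that differentiating under the integral does not generate boundary terms) and that the chain rule on $g(v'_*,z)f(v',z)$ genuinely produces only $v$-derivatives of $g$ and $f$, not of the kernel or of the truncation indicator. Once this compatibility is verified, the remaining work is purely combinatorial bookkeeping of multinomial constants, which can be absorbed into the final constants $C^{\pm}_{R,L,d,r}(B)$, $C_{R,L,d,p,r}(B)$ and $C_{R,L,d,k,r}(B)$.
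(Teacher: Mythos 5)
Your proposal is correct and follows essentially the same route as the paper: a Leibniz expansion in $z$ combined with the uniform bound \eqref{Assump_B} on $\partial_z^k b$ reduces each term to the deterministic estimates of Proposition \ref{QR_1}, with the multinomial constants absorbed into the final constants. The only presentational difference is that the paper factors the loss term as $f\,L^R[g]$ and bounds $\partial_z^n L^R[g]$ in $L^\infty_v$ before taking the product, whereas you expand the triple product directly and, for the $H^k_v$ bound, you spell out the translation-invariance commutation of $\partial_v^\nu$ with $Q^R$ that the paper delegates to the deterministic references.
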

\begin{proof}
Noticing the definition of $ Q^{R,-}(g,f)(v,z) $ in \eqref{Q_minus}, taking $\partial_z^l$ on both sides, and using the Leibniz rule, we have 
$$ \partial_z^l Q^{R,-}(g,f)(v,z) = \sum_{|n|\leq |l|} \binom{l}{n}\,
\partial_z^{l-n}\, f(v,z)\partial_z^n L^R[g](v,z). $$
In particular, we first look at the term
$$ \partial_z^n L^R[g](v,z)= \sum_{|m|\leq |n|}\binom{n}{m}\, \|\partial_z^m b(\cdot,z) \|_{L^1(\mathbb{S}^{d-1})}\left[\left(\mathbf{1}_{|q|\leq R}\Phi \right)\ast
\partial_z^{n-m}g\right](v,z). $$
Based on the assumption \eqref{Assump_B}, we know that $\| \partial_z^m b(\cdot,z) \|_{L^1(\mathbb{S}^{d-1})}$ is bounded, thus 
\begin{equation}
	\begin{split}
\| \partial_z^n L^R[g](\cdot,z) \|_{L_v^{\infty}(\mathcal{D}_L)} 
& \leq c_{r}(B)\, \sum_{|m|\leq |n|} \| \left[\left(\mathbf{1}_{|q|\leq R}\Phi\right) \ast
\partial_z^{n-m}g\right](\cdot,z) \|_{L_v^{\infty}(\mathcal{D}_L)} \\
& \leq c_{r}(B)\, \| \mathbf{1}_{|q|\leq R}\Phi(\cdot) \|_{L_v^{\infty}(\mathcal{D}_L)}
\sum_{|m|\leq |n|} \| \partial_z^{n-m}g(\cdot,z) \|_{L_v^1(\mathcal{B}_{\sqrt{2}L+R})} \\
& \leq  c_{R,L,d,r}(B) \sum_{|m|\leq |n|} \| \partial_z^{n-m} g(\cdot,z) \|_{L_v^1(\mathcal{D}_L)} \\
& \leq  c_{R,L,d,r}(B)\, |g(\cdot,z)|_{L_v^1,r}\,,  
\end{split}
\end{equation}
where $c_{r}(B)$, $c_{R,L,d,r}(B)$ are constants with subscripts indicating their parameters dependence. \\
Therefore, 
\begin{equation}
	\begin{split}
	\| \partial_z^l Q^{R,-}(g, f)(\cdot,z) \|_{L_v^p(\mathcal{D}_L)} 
	& = \left\| \sum_{|n|\leq |l|} \binom{l}{n}\, \partial_z^n L^R[g] (\cdot,z)
	\partial_z^{l-n} f(\cdot,z) \right\|_{L_v^p(\mathcal{D}_L)} \\
	& \leq C_r \sum_{|n|\leq |l|} \| \partial_z^n L^R[g](\cdot,z) \|_{L_v^{\infty}(\mathcal{D}_L)}\, | f(\cdot,z) |_{L_v^p, r} \\
	& \leq C_{R,L,d,r}^{-}(B)\, | g(\cdot,z) |_{L_v^1, r} \, | f(\cdot,z) |_{L_v^p, r}\,.
	\end{split}
\end{equation}
The proof for the estimates for $\partial_z^l Q^{R,+}$ and \eqref{Q_LpZ}--\eqref{Q_HkZ} can be done in a similar fashion, thanks to our assumption \eqref{Assump_B} for the collision kernel and the analysis in the deterministic setting (\cite[Appendix]{HQY21} and \cite[Theorem 2.1]{MV04}), where the derivation follows similarly in our case due to the boundedness of all orders of $z$-derivative of the collision kernel and the Leibniz rule. We thereby omit details. 
\end{proof}

As an extension, one can add up each order of $z$-derivative for the collision operator and obtain the following estimates in the norms that will be used in Section \ref{sec:main}. We summarize it in the Corollary below. 

\begin{corollary}
\label{sumQz}
Under the same assumptions in Proposition \ref{QR_Z}, we have the following estimates: 
\begin{equation}\label{QHKZ|}
\begin{split}
	| Q^{R}(g, f)(\cdot,z) |_{L^p_v,r} \leq 
	C^{'}_{R,L,d,p,r}(B)\, | g(\cdot,z) |_{L^1_v, r} \, | f(\cdot,z) |_{L^p_v, r}\,. 
\end{split}
\end{equation}
Furthermore, 
\begin{equation}\label{QHKZ||}
\begin{split}
	\| Q^{R}(g, f)(\cdot,\cdot) \|_{L^p_v,r} \leq 
	C_{R,L,d,p,r}^{''}(B)\, \| g(\cdot,\cdot) \|_{L^1_v, r} \, \| f(\cdot,\cdot) \|_{L^p_v, r}\,. 
\end{split}
\end{equation}
\end{corollary}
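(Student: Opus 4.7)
The plan is to deduce both inequalities as direct consequences of Proposition \ref{QR_Z}, simply by summing the pointwise-in-$z$ estimate \eqref{Q_LpZ} over all multi-indices $|l|\leq r$ and, for the second bound, by additionally taking an essential supremum in $z$. The reason this works so cleanly is that the constant $C_{R,L,d,p,r}(B)$ furnished by Proposition \ref{QR_Z} is uniform in $z\in I_z$ — it depends on $B$ only through the uniform pointwise bound \eqref{Assump_B} on the $z$-derivatives of $b$, and on the $R$-truncated kinetic factor through \eqref{kinetic} — so no further regularity in $z$ of the collision kernel is consumed in passing to the seminorms.

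For the first inequality \eqref{QHKZ|}, I would simply unfold the definition
\begin{equation*}
| Q^{R}(g, f)(\cdot,z) |_{L^p_v,r} = \sum_{|l|\leq r}\|\partial_z^l Q^R(g,f)(\cdot,z)\|_{L^p_v(\mathcal{D}_L)},
\end{equation*}
apply \eqref{Q_LpZ} to each summand, and note that the number of multi-indices with $|l|\leq r$ in dimension $n$ is a finite combinatorial factor $\binom{r+n}{n}$ which can be absorbed into a new constant $C'_{R,L,d,p,r}(B)$. The right-hand side of \eqref{Q_LpZ} is already expressed in the seminorms $|g(\cdot,z)|_{L^1_v,r}$ and $|f(\cdot,z)|_{L^p_v,r}$, so no further manipulation is required.

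For \eqref{QHKZ||}, I would first apply \eqref{Q_LpZ} term-by-term in $|l|\leq r$ and then take essential supremum over $z\in I_z$ on both sides. Because the constant is independent of $z$ and both factors on the right are non-negative, one has
\begin{equation*}
\operatorname*{ess\,sup}_{z\in I_z}\big(|g(\cdot,z)|_{L^1_v,r}\,|f(\cdot,z)|_{L^p_v,r}\big)\leq \|g(\cdot,\cdot)\|_{L^1_v,r}\,\|f(\cdot,\cdot)\|_{L^p_v,r},
\end{equation*}
by separating the supremum over the product of two non-negative quantities. Summing over $|l|\leq r$ and absorbing the combinatorial factor into $C''_{R,L,d,p,r}(B)$ yields the claim.

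There is really no substantive obstacle here: the analytical content sits entirely in Proposition \ref{QR_Z}, and Corollary \ref{sumQz} is only a repackaging of those bounds into the global seminorms introduced in Section \ref{subsec:norms}. The one point worth checking carefully when executing the proof is that the $L^\infty_z$ and summation-over-$l$ steps commute with one another in the sense above — which they do, since both operations are monotone on non-negative quantities and the pointwise estimate \eqref{Q_LpZ} holds for every $z\in I_z$ with a $z$-independent constant.
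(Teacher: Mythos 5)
Your proposal is correct and follows exactly the route the paper takes: the paper's own proof is the one-line remark that the corollary follows ``by adding up all $z$-derivatives of the collision operators for $|l|\leq r$,'' which is precisely your summation over multi-indices plus the supremum in $z$ for the second bound. The only content you add beyond the paper is the (correct) observation that the constant from Proposition \ref{QR_Z} is uniform in $z$ and that the supremum of the product is dominated by the product of the suprema, both of which are needed and unproblematic.
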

\begin{proof}
The proof is by adding up all $z$-derivatives of the collision operators for $|l|\leq r$, and we omit it here. 
\end{proof}


\section{Well-posedness of the numerical solution \texorpdfstring{$f_N$}{fN} with uncertainties}
\label{sec:main}

In this section, we establish the well-posedness of the numerical solution $f_N$, depending on the random variable $z$, in the Fourier-Galerkin spectral system \eqref{PFS} on any arbitrarily given time interval $[0, T]$. Note that, similar to the deterministic case in \cite{HQY21}, the main challenge in the proof lies in the fact that the numerical solution $f_N$ to the system~\eqref{PFS} is not necessarily non-negative because of the spectral projection in the velocity space,
even though its analytic counterpart $f$ to the original problem~\eqref{ABE} is always non-negative.

Our strategy is to extend the local well-posedness result to the whole time interval $[0, T]$, by applying the energy-type method. In Section~\ref{subsec:propagation}, we obtain the propagation of $L_v^2$ and $H_v^k$ norm of the solution $f_N$ with uncertainties, where the key is a "good" control of the negative part of $f_N$, besides using the mass conservation and {\it a priori} $L^1$ bound. In Section~\ref{subsec:wellposed}, first the local existence and uniqueness over a sufficiently small time interval $[0, \tau]$ will be shown via a fixed point theorem, where the negative part of the numerical solution $f_N$ is controlled over the same period of time with large enough $N$, which further implies that the initial $L_v^1$ bound of $f_N$ can be preserved at time $\tau$. Therefore, the same procedure can be repeated iteratively to extend the solution up to the prescribed final time $T$, where the values of designated parameters $N$ and $\tau$ are shown to be the same in each iteration as the beginning ones.

\subsection{Propagation of \texorpdfstring{$H_v^k$}{Hvk} estimates with uncertainties}
\label{subsec:propagation}

In this subsection, we first recall the $H_v^k$ estimates of $f_N$ studied in \cite{HQY21} with {\it a priori} $L^1$ bound of $f_N$, where no uncertainty is involved. Note that for each $z \in I_z$, under our assumptions on the collision kernel, the estimates for $f_N$ hold similarly. We thereby put it in the Appendix. 

\subsubsection{\texorpdfstring{$H_v^k$}{Hvk} estimate of \texorpdfstring{$f_N$}{fN} with \texorpdfstring{$z$}{z}-derivatives}
\label{subsub:Hk}


We now consider $z$-derivatives of $f_N$. The strategy in obtaining local well-posedness results for each order $z$-derivative of $\fe$ is reminiscent of \cite{DJL21}.
\begin{proposition}
\label{regularity}
Let the truncation parameters $R$, $L$ satisfy \eqref{RL} and the collision kernel $B$ satisfy \eqref{kernel}--\eqref{cutoff} and  \eqref{kinetic}--\eqref{Assump_B}.
For the numerical system \eqref{PFS}, assume that the initial condition $\partial_z^l f_{N}^0(v,z)  \in H^{k}_{v}(\D) $ for some integers $k\geq 0$ and $|l|\leq r$, that is,  
\begin{equation}
    \| \partial^{l}_{z}f^{0}(\cdot,z)\|_{H^{k}_{v}(\D)} \leq D^{f^0}_{k,l}, \qquad \forall z \in I_{z}.
\end{equation}
If each $z$-derivative of the solution $f_N(t,v,z)$ is assumed to have a $L_v^{1}$ bound up to some time $t_0$, i.e., 
\begin{equation}\label{Priori_fZ}
\forall t\in [0,t_0], \quad \left\| \partial_z^l \fe(t,\cdot,z) \right\|_{L_{v}^{1}(\D)} \leq  E_{l}, \qquad \forall z \in I_{z}, 
\end{equation}
then there exists a constant $K_{k,l}$ depending on $t_0$, $E_l$, $D^{f^0}_{k,l}$ and $C_{R,L,d,2}(B)$ such that, for $|l|\leq r$,
\begin{equation}\label{fN_HkZ}
\forall t\in[0,t_0], \quad \left\| \partial_z^l \fe(t,\cdot,z) \right\|_{H^k_v(\D)} \leq K_{k,l}(t_0), \qquad \forall z \in I_z \,.
\end{equation}
\end{proposition}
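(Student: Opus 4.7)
The plan is to induct on $|l|$, holding $z\in I_z$ fixed throughout and producing bounds that are uniform in $z$. The base case $|l|=0$ is purely deterministic: given the a priori $L_v^1$ bound $E_0$ and the $H_v^k$ initial bound, the $H_v^k$-propagation estimate recalled from \cite{HQY21} and collected in the Appendix yields $\|f_N(t,\cdot,z)\|_{H_v^k}\le K_{k,0}(t_0)$ on $[0,t_0]$ uniformly in $z$.

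\textbf{Inductive step.} Fix $1\le |l|\le r$ and assume the conclusion for all multi-indices of order strictly less than $|l|$. Applying $\partial_z^l$ to \eqref{PFS} and using Leibniz on the two copies of $f_N$ and on the $z$-dependent angular kernel $b(\sigma\cdot\hat q,z)$ gives
\begin{equation*}
\partial_t \partial_z^l f_N \;=\; \mP_N \sum_{l_1+l_2+l_3=l}\binom{l}{l_1,l_2,l_3}\,\widetilde Q^{R,l_3}\bigl(\partial_z^{l_1} f_N,\partial_z^{l_2} f_N\bigr),
\end{equation*}
where $\widetilde Q^{R,l_3}$ is obtained by replacing $b$ with $\partial_z^{l_3}b$, still a bounded kernel by \eqref{Assump_B}. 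The standard energy computation against $\partial_z^l f_N$ in $H_v^k$, together with the $H_v^k$-contraction of $\mP_N$, produces
\begin{equation*}
\frac{d}{dt}\|\partial_z^l f_N\|_{H_v^k} \;\le\; \sum_{l_1+l_2+l_3=l}\binom{l}{l_1,l_2,l_3}\,\bigl\|\widetilde Q^{R,l_3}(\partial_z^{l_1}f_N,\partial_z^{l_2}f_N)\bigr\|_{H_v^k}.
\end{equation*}
To each summand I would apply the deterministic single-index bilinear bound $\|Q^R(g,f)\|_{H_v^k}\lesssim\|g\|_{L_v^1}\|f\|_{H_v^k}+\|g\|_{H_v^k}\|f\|_{L_v^1}$ (which survives the bounded-kernel replacement $b\mapsto \partial_z^{l_3}b$), and then split the sum into the two top-order pieces $(l_1,l_2,l_3)=(l,0,0)$ and $(0,l,0)$ plus a remainder in which both derivative orders are strictly below $|l|$. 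The remainder is controlled by a time-independent constant $\alpha_{k,l}(t_0)$ via the inductive hypothesis, the base case, and \eqref{Priori_fZ}; in the top-order terms one factor lies in $L_v^1$ (bounded by $E_l$ or $E_0$) while the other is either $\|f_N\|_{H_v^k}\le K_{k,0}(t_0)$ (absorbed into $\alpha_{k,l}$) or $\|\partial_z^l f_N\|_{H_v^k}$ itself. The outcome is a linear differential inequality
\begin{equation*}
\frac{d}{dt}\|\partial_z^l f_N(t,\cdot,z)\|_{H_v^k} \;\le\; \alpha_{k,l}(t_0)+\beta_{k,l}\,\|\partial_z^l f_N(t,\cdot,z)\|_{H_v^k},
\end{equation*}
with $z$-independent constants, and Gronwall combined with the initial $H_v^k$ bound on $\partial_z^l f_N^0$ (inherited from $D^{f^0}_{k,l}$ via $f_N^0=\mP_N f^0$ and the $H_v^k$-contraction of $\mP_N$) produces the desired $K_{k,l}(t_0)$, closing the induction.

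\textbf{Main obstacle.} The delicate point is that a direct use of the aggregate estimate \eqref{Q_HkZ} would bound $\|\partial_z^l Q^R(f_N,f_N)\|_{H_v^k}$ by $C\,|f_N|_{H_v^k,r}^2$, coupling $\|\partial_z^l f_N\|_{H_v^k}$ to itself \emph{quadratically} and destroying any Gronwall argument. The remedy is to expand via Leibniz and apply the single-index bilinear estimate summand by summand, using the pointwise-in-$z$ $L_v^1$ bound \eqref{Priori_fZ} on every order of $z$-derivative so that in each of the two top-order summands one factor stays in $L_v^1$ and only one copy of $\|\partial_z^l f_N\|_{H_v^k}$ survives. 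This sharper bookkeeping, reminiscent of the gPC-SG argument in \cite{DJL21}, is what makes the induction close.
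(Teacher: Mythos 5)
Your proposal is correct and follows essentially the same route as the paper: induction on the order of the $z$-derivative, a Leibniz expansion of $\partial_z^l Q^R(f_N,f_N)$ into operators with the kernel $b$ replaced by $\partial_z^{l_3}b$ (the paper's $Q_{B^{k}}^R$), isolation of the two top-order summands so that the a priori $L_v^1$ bounds leave only one linear occurrence of $\|\partial_z^l f_N\|_{H_v^k}$, and Gr\"onwall. Your remark on the ``main obstacle'' is precisely the point of the paper's estimates \eqref{Q_Leibniz}--\eqref{Q_Leibniz2}, and in fact your treatment of the $H_v^k$ case is slightly more explicit than the paper's, which carries out only the $L_v^2$ case in detail.
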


\begin{proof}
Recall the truncated collision operator $Q^R$ in \eqref{QR} with the uncertain collision kernel, 
$$ Q^R(h,f)(v,z) = \int_{\mathbb R^d}\int_{\mathbb{S}^{d-1}} B_T(\sigma\cdot \hat{q},\,|q|,\,z) \left[h(v_*',z)f(v',z)-h(v_{*},z)f(v,z)\right] \,\rd{\sigma}\, \rd{v_{*}}\,, $$
where $B_T(\sigma\cdot \hat{q},|q|,z)$ is denoted as the truncated collision kernel $B_T = \mathbf{1}_{|q|\leq R}\,\Phi(|q|)\,b(\sigma\cdot \hat{q}, z)$. 
With \eqref{Assump_B}, we assume that for $|l|\leq r$, all $l$-th order $z$-derivative of $B_T$ is uniformly bounded, i.e., 
\begin{equation}\label{B_TZ} 
| \partial_z^l B_T | \leq C, \qquad \forall z \in I_z. 
\end{equation}
For the convenience of notation, let us define 
\begin{equation}
\begin{split}
    Q_{B^{k}}^R(h,f)(v,z) =& \int_{\mathbb R^d}\int_{\mathbb{S}^{d-1}} \partial_z^k B_T(\sigma\cdot \hat{q},|q|,z) \left[h(v_*',z)f(v',z) - h(v_{*},z)f(v,z)\right]\,\rd{\sigma}\, \rd{v_{*}}\\[4pt]
    :=& \int_{\mathbb R^d}\int_{\mathbb{S}^{d-1}} \partial_z^k B_T \left[h'_{*} f' - h f \right]\,\rd{\sigma}\, \rd{v_{*}}\,.
\end{split}
\end{equation}
Note that the only difference between $Q_{B^{k}}^R$ and $Q^R$ is that $B_T$ is substituted by $\partial_z^l B_T$. Under the assumption \eqref{B_TZ}, it is obvious that the estimates for $Q^R$ in the deterministic problem \cite{HQY21} follow similarly for $Q_{B^{k}}^R$ here, in particular, 
\begin{equation}\label{Q_BZ} 
	\|Q_{B^{k}}^R(h,f)(\cdot,z)\|_{L_v^p(\D)} \leq C_{R,L,d,p}(B)\, \|h(\cdot,z)\|_{L_v^1(\D)}\, \|f(\cdot,z)\|_{L_v^p(\D)} \,. 
\end{equation}
By using the Leibniz rule, one can calculate that
\begin{equation}\label{Q_Leibniz}
\begin{split}
\partial_z^l Q^R(h, f) & = \sum_{|n|=0}^{|l|} \binom{l}{n} \int_{\mathbb R^d}\int_{\mathbb{S}^{d-1}}
\partial_z^{l-n}B_T \sum_{|m|=0}^{|l|} \binom{n}{m}\left[\partial_z^m h_{*}'\, \partial_z^{n-m}f' - \partial_z^m h_{*}\,\partial_z^{n-m}f\right]\,\rd{\sigma}\, \rd{v_{*}} \\[4pt]
& = \sum_{|n|=0}^{|m|} \binom{n}{l} \sum_{|m|=0}^{|l|} \binom{n}{m} Q_{B^{n-l}}^R (\partial_z^m h, \partial_z^{n-m}f) \\[4pt]
& = \sum_{|n|=0}^{|l|-1} \sum_{|m|=0}^{|n|} \binom{l}{n} \binom{n}{m} Q_{B^{l-l}}^R (\partial_z^m h, \partial_z^{n-m}f)
+ \sum_{|m|=1}^{|l|-1}\binom{l}{m}Q^R(\partial_z^m h, \partial_z^{l-m}f) \\[4pt] 
& \quad + Q^R(h, \partial_z^l f) + Q^R(\partial_z^l h, f)\,.  
\end{split}
\end{equation}
Let $h$ and $f$ both equal to $f_N$, we prepare for the bound that will be used later:  
\begin{equation}\label{Q_Leibniz2}
\begin{split}
&\| \partial_z^l Q^R(f_N, f_N)(\cdot,z) \|_{L_v^2(\D)} \\[4pt]
 \leq&
C_{R,L,d,2}(B) \sum_{|n|=0}^{|l|-1} \sum_{|m|=0}^{|n|} \binom{l}{n} \binom{n}{m} \|\partial_z^m f_N(\cdot,z) \|_{L_v^1(\D)} \,
\|\partial_z^{n-m}f_N(\cdot,z) \|_{L_v^2(\D)} \\[4pt]
& \quad + C_{R,L,d,2}(B) \sum_{|m|=1}^{|l|-1} \binom{l}{m}  \|\partial_z^m f_N(\cdot,z) \|_{L_v^1(\D)}\, \|\partial_z^{l-m} f_N(\cdot,z) \|_{L_v^2(\D)} \\[4pt]
&\quad + C_{R,L,d,2}(B)\, \|f_N(\cdot,z)\|_{L_v^1(\D)}\, \|\partial_z^l f_N(\cdot,z) \|_{L_v^2(\D)} \\[4pt]
& \quad + C_{R,L,d,2}(B)\, \|\partial_z^l f_N(\cdot,z)\|_{L_v^1(\D)}\, \|f_N(\cdot,z)\|_{L_v^2(\D)}\,,
\end{split}
\end{equation}
where \eqref{Q_BZ} is used for each term on the right-hand-side with $p=2$. 

We now prove the Proposition by induction. When $|l|=0$, for all $z \in I_z$ the proof is exactly the same as the deterministic case, thus for $ t\in [0,t_0]$ and for all $z \in I_z$, $\left\| \fe(t,\cdot,z) \right\|_{L^{2}_{v}(\D)} \leq K_{0,0}$. 
Assume that $\left\| \partial_z^m \fe(t,\cdot,z) \right\|_{L^{2}_{v}} \leq K_{0,m}$ for all integers $|m| \leq |l|-1$ and $z \in I_z$. 
Take $\partial_z^l$ on both sides of Eq.~\eqref{PFSNK}, then 
\begin{equation}
	\partial_t \partial_z^l f_N(t,v,z) = \mathcal{P}_N \partial_z^l Q(f_N, f_N)(t,v,z).
\end{equation}
Multiplying both sides by $\partial_z^l f_N$ and integrating over $\mathcal{D}_L$ gives 
\small{
\begin{equation}\label{fN_inequality}
\begin{split}
& \quad \frac{1}{2} \frac{\rd}{\rd t} \|\partial_z^l f^N(t,\cdot,z) \|_{L_{v}^2(\D)}^2  = \int_{\D} \mathcal{P}_N \partial_z^l Q^R(f_N, f_N) (t,v,z)
\partial_z^l f_N(t,v,z) \,\rd v \\
& \leq \|\mathcal{P}_N \partial_z^l Q^R(f_N, f_N)(t,\cdot,z) \|_{L_v^2(\D)}\,
\|\partial_z^l f_N(t,\cdot,z) \|_{L_v^2(\D)} \\[4pt]
& \leq \| \partial_z^l Q^R(f_N, f_N)(t,\cdot,z) \|_{L_v^2(\D)}\, \|\partial_z^l f_N(t,\cdot,z) \|_{L_v^2(\D)} \\[4pt]
&  \leq C_{R,L,d,2}(B) \Big( E_0 \, \|\partial_z^l f_N(t,\cdot,z) \|_{L_v^2(\D)} + E_l \, \|f_N(t,\cdot,z)\|_{L_v^2(\D)} + C_{E,K,l-1} \Big)\, \|\partial_z^l f_N(t,\cdot,z) \|_{L_v^2(\D)} \\[4pt]
&  \leq C_{R,L,d,2}(B)\, E_0\, \|\partial_z^l f_N(t,\cdot,z) \|_{L_v^2(\D)}^2 + C_{R,L,d,2}(B) \left( E_l K_{0,0} + C_{E,K,l-1}\right) \|\partial_z^l f_N(t,\cdot,z) \|_{L_v^2(\D)}\,,
\end{split}
\end{equation}
}
where {\it a priori} $L_v^1$ bound of $\partial_z^l f_N$, given in Eq.~\eqref{Priori_fZ}, and the induction assumption is used in the third inequality. Here $C_{E,K,l-1}$ represents the constant depending on $l-1$, maximum of $E_m$ and $K_{0,m}$ for $|m| \leq |l|-1$. For notation simplicity, we will shorten the subscript of constants. Use the Gr{\"o}nwall's inequality, for all $z\in I_z$, then 
\begin{equation}
\begin{split} 
\| \partial_z^l f_N(t,\cdot,z) \|_{L_{v}^2(\D)} &\leq  \e^{2 C_{R,L,d,2,E_0}(B)\, t} \left( \|\partial_z^l f_N^0(t,\cdot,z) \|_{L_{v}^2(\D)} + C_{R,L,d,2,E,K,l-1}(B) \right)   \\[4pt]
&\leq \e^{2 C_{R,L,d,2,E_0}(B)\, t} \left( D_{0,l}^{f^0} + C_{R,L,d,2,E,K,l-1}(B)\right) : = K_{0,l}(t). 
\end{split}
\end{equation}
By mathematical induction, we can prove that for all $|l|\leq r$ and $z \in I_z$, 
$$\forall t \in [0, t_0],\quad  \left\| \partial_z^l f_N(t, \cdot, z) \right\|_{L^2_v(\D)} \leq K_{0,l}(t_0)\,. $$

To extend the estimate to $H_v^k(\D)$ norm of $\partial_z^l f_N$, 
we can obtain the counterpart of \eqref{fN_inequality}, i.e., for all $z\in I_z$ and $\forall t \in [0, t_0]$,
$$\frac{1}{2} \frac{\rd}{\rd t}\left\|\partial_z^l \fe(t,\cdot,\cdot)\right\|^{2}_{H_v^{k+1}(\D)} \leq \left\|\partial_z^l Q^{R}(\fe,\fe)(t,\cdot,\cdot) \right\|_{H_v^{k+1}(\D)} \left\| \partial_z^l \fe(t,\cdot,\cdot) \right\|_{H_v^{k+1}(\D)}, $$
where $|l|\leq r$. By combining the the estimate \eqref{Q_HkZ} in Proposition~\ref{QR_Z}, Eq.~\eqref{Q_Leibniz} and induction, the rest of the proof follows naturally, thus we omit it here. 
\end{proof}
After studying each order of $z$-derivatives for the numerical solution $\fe$, we extend the regularity result to our newly-established norm wherein all the $z$-derivatives of $\fe$ are added up. 
\begin{corollary}\label{regularity1}
	Let the truncation parameters $R$, $L$ satisfy \eqref{RL} and the collision kernel $B$ satisfy \eqref{kernel}--\eqref{cutoff} and \eqref{kinetic}--\eqref{Assump_B}.
	For the numerical system \eqref{PFS}, if the initial datum is assumed to be
	\begin{equation}
		| f^{0}(\cdot,z)|_{H^{k}_{v},r} \leq D^{f^0}_{k,r},  \quad \forall z \in I_{z}
	\end{equation}
	which is equivalent to $\| f^{0}(\cdot,\cdot)\|_{H^{k}_{v},r} \leq D^{f^0}_{k,r}$. If we further assume
	\begin{equation}
		\forall t \in [0,t_0], \quad |f_N(t,\cdot,z)|_{L^{1}_{v},r} \leq E_{r}, \quad \forall z \in I_{z} 
	\end{equation}
	which also implies $\|f_N(t,\cdot,\cdot)\|_{L^{1}_{v},r} \leq E_{r}$.
	Then, for any $ t\in[0,t_0]$, we have, 
	\begin{equation}\label{fN_HkZr}
	\begin{split}
	    |f_N(t,\cdot,z) |_{H^{k}_{v},r} \leq K_{k,r}(t_0), \quad \text{or equivalently} \quad & \| f_N(t,\cdot,\cdot) \|_{H^{k}_{v},r} \leq K_{k,r}(t_0), \quad \forall z \in I_{z}
	\end{split}
	\end{equation}
 where $K_{k,r}$ is a constant depending on $t_0$, $E_r$, $D^{f^0}_{k,r}$ and $C_{R,L,d,2,r}(B)$. 
\end{corollary}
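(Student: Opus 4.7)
The plan is to obtain this corollary as a direct consequence of Proposition \ref{regularity}, essentially by summing the per-order estimates over $|l|\leq r$. First, I would unpack the summed-norm hypothesis $|f^{0}(\cdot,z)|_{H^{k}_{v},r}\leq D^{f^0}_{k,r}$: by definition of the summed norm this reads $\sum_{|l|\leq r}\|\partial_z^l f^0(\cdot,z)\|_{H^k_v(\D)}\leq D^{f^0}_{k,r}$, and hence $\|\partial_z^l f^0(\cdot,z)\|_{H^k_v(\D)}\leq D^{f^0}_{k,r}$ for every individual multi-index $l$ with $|l|\leq r$. Applying the same extraction to $|f_N(t,\cdot,z)|_{L^1_v,r}\leq E_r$ gives $\|\partial_z^l f_N(t,\cdot,z)\|_{L^1_v(\D)}\leq E_r$ for each $|l|\leq r$ and every $t\in[0,t_0]$. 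These are precisely the per-order hypotheses required to invoke Proposition \ref{regularity}.

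Next, I would apply Proposition \ref{regularity} once for each multi-index $l$ with $|l|\leq r$, producing a constant $K_{k,l}(t_0)$ depending on $t_0$, $E_r$, $D^{f^0}_{k,r}$ and $C_{R,L,d,2}(B)$, such that $\|\partial_z^l f_N(t,\cdot,z)\|_{H^k_v(\D)}\leq K_{k,l}(t_0)$ uniformly in $z\in I_z$ and $t\in[0,t_0]$. Summing these bounds over all $|l|\leq r$ and recalling the definition of the summed norm yields
\begin{equation*}
| f_N(t,\cdot,z) |_{H^k_v,r} = \sum_{|l|\leq r}\|\partial_z^l f_N(t,\cdot,z)\|_{H^k_v(\D)} \leq \sum_{|l|\leq r} K_{k,l}(t_0) =: K_{k,r}(t_0),
\end{equation*}
which is the first inequality in \eqref{fN_HkZr}. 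Since the right-hand side is independent of $z\in I_z$, taking the supremum in $z$ on the left-hand side converts this at once into its $L^\infty_z$-based counterpart $\| f_N(t,\cdot,\cdot) \|_{H^k_v,r}\leq K_{k,r}(t_0)$, giving the equivalent form stated in the corollary.

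There is no genuine obstacle beyond the bookkeeping above: the induction on $l$, the Gr\"onwall closure, and the treatment of the lower-order remainder terms coming from the Leibniz expansion \eqref{Q_Leibniz} were already carried out inside Proposition \ref{regularity}. The only point deserving a brief verification is that all per-order constants $K_{k,l}(t_0)$ produced by that proposition depend on the same structural parameters (the time horizon $t_0$, the common $L^1_v$ bound $E_r$, the initial-data constant $D^{f^0}_{k,r}$, and the kernel-dependent constant $C_{R,L,d,2}(B)$ from Proposition \ref{QR_Z}), so their finite sum $K_{k,r}(t_0)$ is again a constant of precisely the type advertised in the statement, with no new dependencies introduced.
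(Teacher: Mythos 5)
Your proposal is correct and is essentially the argument the paper intends: the corollary is obtained by extracting the per-order hypotheses from the summed norms, invoking Proposition \ref{regularity} for each multi-index $|l|\leq r$, and summing the resulting bounds $K_{k,l}(t_0)$ (the paper omits the proof entirely, treating it as immediate in the same way Corollary \ref{sumQz} is). Your added remark that the uniformity in $z$ of each $K_{k,l}(t_0)$ is what makes the $\sup_z$ version follow, and that the summed constant inherits only the advertised dependencies (picking up the extra $r$-dependence from the number of terms), is exactly the right bookkeeping.
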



\subsubsection{\texorpdfstring{$L_v^2$}{Lv2} estimate of the negative part \texorpdfstring{$f^-_N$}{f-N} with \texorpdfstring{$z$}{z}-derivatives}


We now proceed to estimate the negative part of $z$-derivatives of $\fe$, i.e., $\partial_z^l \fe^{-}$, which relies on a careful estimate of both the gain and loss terms of the collision operator. This estimate will play a key role in the main Theorem \ref{existencetheorem} for the well-posedness of $f_N$.

\begin{proposition}
Let the truncation parameters $R$, $L$ satisfy \eqref{RL}, and assume the collision kernel $B$ satisfy \eqref{kernel}--\eqref{cutoff} and \eqref{kinetic}--\eqref{Assump_B}. 
For the negative part of the numerical solution $f_N^-$ to the system \eqref{PFS}, one has
\begin{equation}\label{fe_minus}
\forall t \in [0,t_0],\quad  \left\| f_N^-(t,\cdot, z) \right\|_{L^2_{v},r}
 \leq \e^{ \mathcal{C}_{K_{0,r}}\, t} \left( \| f_N^{0,-}(\cdot, z) \|_{L^2_{v},r} + \frac{\mathcal{C}_{K_{1,r}}}{N} \right), 
\end{equation}
where $\fe^{0,-}$ stands for the initial datum of $\fe^{-}$, the constant $\mathcal{C}_{K_{0,r}}$ depends on $R$,$L$,$d$,$B$,$E_r$,$K_{0,r}$, 
while the constant $\mathcal{C}_{K_{1,r}}$ depends on 
$R$,$L$,$d$,$r$,$B$ and $K_{1,r}$.
\end{proposition}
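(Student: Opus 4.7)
The plan is to derive a differential inequality for $\|\fe^{-}(t,\cdot,z)\|_{L^2_v,r}$ and then apply Gr\"onwall's inequality. Set $\gn^l = \partial_z^l \fe$ and note that $\gn^l$ satisfies the evolution equation
\begin{equation*}
\partial_t \gn^l(t,v,z) = \partial_z^l Q^R(\fe,\fe)(t,v,z) + E_N^l(t,v,z),
\end{equation*}
where the projection defect $E_N^l := (\mP_N - I)\partial_z^l Q^R(\fe,\fe)$. I would test this equation against $\gn^l \mathbf{1}_{\{\gn^l \leq 0\}} = -\gn^{l,-}$ and integrate over $\D$; since $\partial_t \gn^l = \partial_t \gn^{l,+} - \partial_t \gn^{l,-}$ on the level sets, the left-hand side collapses to $\tfrac12 \tfrac{\rd}{\rd t}\|\gn^{l,-}(t,\cdot,z)\|_{L^2_v(\D)}^2$. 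The scheme then reduces to a careful estimate of each contribution on the right.

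Next I would treat the projection defect by invoking the standard Fourier-projection bound $\|(\mP_N - I) h\|_{L^2_v} \leq C_{\mP} N^{-1} \|h\|_{H^1_v}$ together with the $H^1_v$ estimate of Proposition~\ref{QR_Z} and the uniform $H^1_v$ bound on $|\fe|_{H^1_v,r}$ provided by Corollary~\ref{regularity1}; the Cauchy-Schwarz inequality yields a term of the form $N^{-1}\, K_{1,r}^2(t_0)\, \|\gn^{l,-}\|_{L^2_v}$. For the collision contribution I would use the Leibniz rule to expand
\begin{equation*}
\partial_z^l Q^R(\fe,\fe) = Q^R(\gn^l,\fe) + Q^R(\fe,\gn^l) + \text{L.O.T.},
\end{equation*}
where the lower-order terms involve only $z$-derivatives of order strictly less than $l$ and are uniformly bounded in $L^2_v$ by Corollary~\ref{regularity1} and the $L^2_v$ estimate in Proposition~\ref{QR_Z}.

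The heart of the argument is the treatment of $Q^R(\gn^l,\fe)$ and $Q^R(\fe,\gn^l)$. I would decompose each into its gain and loss parts, then decompose both arguments into positive and negative parts. Integrating against $-\gn^{l,-}$, the terms $-Q^{R,+}(\gn^{l,+},\fe^{+})\gn^{l,-}$ and $-L^R[\gn^{l,+}]\fe^{+}\gn^{l,-}$ have the favourable sign and can be discarded; what remains is a sum of terms of the form $Q^{R,\pm}(u,w)\gn^{l,-}$ where either $u$ or $w$ is a non-negative function of the type $\fe^{-}$, $\gn^{l,-}$, $\fe^{+}$, or $\gn^{l,+}$. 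Applying the $L^2_v$ estimates of Proposition~\ref{QR_Z} on the gain and loss operators, together with Cauchy-Schwarz in $v$, yields bounds by products such as $\|\gn^l\|_{L^1_v}\,\|\fe^{-}\|_{L^2_v}\,\|\gn^{l,-}\|_{L^2_v}$ and $\|\fe\|_{L^2_v}\,\|\gn^{l,-}\|_{L^2_v}^2$. Using the a~priori $L^1_v$ bound $E_r$ on $|\fe|_{L^1_v,r}$ and the $L^2_v$ bound $K_{0,r}(t_0)$ from Corollary~\ref{regularity1}, the right-hand side is dominated by a multiple of $\|\gn^{l,-}\|_{L^2_v}^2$ plus lower-order and projection-defect pieces.

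Dividing by $\|\gn^{l,-}\|_{L^2_v}$, summing over $|l|\leq r$, taking the supremum over $z \in I_z$, and collecting constants produces the scalar inequality
\begin{equation*}
\frac{\rd}{\rd t}\|\fe^{-}(t,\cdot,z)\|_{L^2_v,r} \leq \mathcal{C}_{K_{0,r}}\, \|\fe^{-}(t,\cdot,z)\|_{L^2_v,r} + \frac{\mathcal{C}_{K_{1,r}}}{N},
\end{equation*}
whose Gr\"onwall integration on $[0,t_0]$ gives the claimed bound. The main obstacle I anticipate is bookkeeping: the Leibniz expansion produces many cross terms involving mixed orders of $z$-derivatives, and one must verify that every such cross term is either absorbed into the favourable-sign part, controlled by the inductive $H^1_v$-regularity of Corollary~\ref{regularity1}, or folded into the $\mathcal{C}_{K_{1,r}}/N$ defect. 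Once the sign analysis of gain/loss versus positive/negative decompositions is organised cleanly, the remainder of the proof is a Gr\"onwall routine.
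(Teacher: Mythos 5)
Your proposal is correct and follows essentially the same route as the paper: the same splitting $\partial_t \gn^l = \partial_z^l Q^R(\fe,\fe) + E_N^l$, testing against $\gn^l\mathbf{1}_{\{\gn^l\le 0\}}$, the $C_{\mP}N^{-1}\,\|\cdot\|_{H^1_v}$ bound on the projection defect combined with Corollary~\ref{regularity1}, the Leibniz reduction to $Q^R(\gn^l,\fe)+Q^R(\fe,\gn^l)+\text{L.O.T.}$, the gain/loss and positive/negative sign analysis discarding the favourable terms, and the final summation over $|l|\le r$, supremum in $z$, and Gr\"onwall step. No substantive differences to report.
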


\begin{proof}

Denote $\gn^{l}(t,v,z) = \partial_z^l \fe(t,v,z)$ for $|l|\leq r$. Take $l$-th order $z$-derivative on both sides of the equation 
\begin{equation}
    \partial_t \fe(t,v,z) = \mathcal{P}_N Q^R(\fe, \fe)(t,v,z),
\end{equation}
then $g_N^l$ satisfies the numerical system
\begin{equation}
	\partial_t g_{N}^{l}(t,v,z) = \mP_N \partial_z^l Q^R(\fe,\fe)(t,v,z).
\end{equation}

Note that $\gn^{l}(t,v,z) = \gn^{l,+}(t,v,z) - \gn^{l,-}(t,v,z)$. We first rewrite the above equation as 

\begin{equation} \label{eqn-g1} 
\partial_t \gn^{l}(t,v,z) = \partial_z^l Q^R(\fe, \fe)(t,v,z) + E^{l}_N(t,v,z), 
\end{equation}
where 
\begin{equation} \label{eqn-E}
E_N^{l}(t,v,z):= \mP_N \partial_z^l Q^R(\fe, \fe)(t,v,z)  - \partial_z^l Q^R(\fe, \fe)(t,v,z). 
\end{equation}

Define the indicator function $\mathbf{1}_{\left\lbrace \gn^{l}\leq 0\right\rbrace}$ as follows: 
\begin{equation}\label{chi1}
	\mathbf{1}_{\left\lbrace \gn^{l}\leq 0\right\rbrace} (v) := \begin{cases}
		1,& \text{for}\ v\ \text{s.t.}\ \gn^{l}(\cdot,v,\cdot) \leq 0, \\
		0,& \text{for}\ v\ \text{s.t.}\ \gn^{l}(\cdot,v,\cdot) > 0.
	\end{cases}
\end{equation}

Multiply both sides of \eqref{eqn-g1} by $\gn^{l}\mathbf{1}_{\left\lbrace \gn^{l}\leq 0\right\rbrace}$ and integrate on $\D$, the left-hand-side becomes 
\begin{equation}\label{gn_char} 
\begin{split}
    \gn^{l}  \mathbf{1}_{\left\lbrace \gn^{l}\leq 0\right\rbrace} (t,v,z) \partial_t \gn^{l}(t,v,z) =& - \gn^{l,-}(t,v,z) \partial_t \left[\gn^{l,+}(t,v,z) - \gn^{l,-}(t,v,z)\right]\\
    =& \ \gn^{l,-}(t,v,z) \partial_t \gn^{l,-}(t,v,z),
\end{split}
\end{equation}
whereas, for the right-hand-side, we first estimate the remainder $E^{l}_N$, 
\begin{equation}
\begin{split}
\| E^{l}_N(t,\cdot,z) \|_{L_v^2(\D)} & = \left\| \mP_N \partial_z^l Q^R(\fe, \fe)(t,\cdot,z)  - \partial_z^l Q^R(\fe, \fe)(t,\cdot,z) \right\|_{L_{v}^2(\D)} \\[4pt]
& \leq \frac{C_{\mP}}{N} \left\| \partial_z^l Q^R(\fe, \fe)(t,\cdot,z) \right\|_{H_{v}^1(\D)}\\
& \leq \frac{C_{\mP}C_{R,L,d,1,r}(B)}{N} \Big | \fe(t,\cdot,z) \Big |^2_{H_{v}^1, r}\,,
\end{split}
\end{equation}
where we used the property of the projection operator $\mP_N$ and the estimate \eqref{Q_HkZ} in the last inequality. 

\vspace{0.1in}
By considering \eqref{fN_HkZr}, since $\|\partial_z^l \fe(t,\cdot,z) \|_{H_{v}^1(\D)}$ is bounded, so we have 
$$\displaystyle\big | f_N(t,\cdot,z) \big |_{H_{v}^1, r} = \sum_{|l|\leq r} \| \partial_z^l f_N(t,\cdot,z) \|_{H_{v}^1(\D)} \leq K_{1,r}(t),$$ 
Therefore, 
\begin{equation}\label{EN}
\begin{split}
\int_{\D} E^{l}_N(t,v,z) \gn^{l} \mathbf{1}_{\left\lbrace \gn^{l}\leq 0\right\rbrace}(t,v,z) \, \rd v & = - \int_{\D} E^{l}_N(t,v,z) \gn^{l,-}(t,v,z) \,\rd v \\[4pt]
& \leq \| E^{l}_N(t,\cdot,z) \|_{L_{v}^2(\D)}\, \| \gn^{l,-}(t,\cdot,z) \|_{L_{v}^2(\D)} \\[4pt]
& \leq \frac{C_{\mP}C_{R,L,d,1,r}(B)}{N} \Big | \fe(t,\cdot,z) \Big |^2_{H_{v}^1, r}\, \|\gn^{l,-}(t,\cdot,z)\|_{L_{v}^2(\D)} \\[4pt]
& \leq  \frac{C_{\mP}C_{R,L,d,1,r}(B)\, K^2_{1,r}(t)}{N} \|\gn^{l,-}(t,\cdot,z)\|_{L_{v}^2(\D)}\,.
\end{split}
\end{equation}  
We now estimate the other term on the right-hand side, 
\begin{equation}\label{Q_g}
\begin{aligned}
&\int_{\D} \partial_z^l Q^R(\fe, \fe)(t,v,z) \gn^{l}\mathbf{1}_{\left\lbrace \gn^{l} \leq 0\right\rbrace}(t,v,z) \, \rd v \\[4pt]
= & - \int_{\D} \partial_z^l Q^R(\fe, \fe)(t,v,z) \gn^{l,-}(t,v,z) \,\rd v \\[4pt]
\leq & \|\partial_z^l Q^R(\fe, \fe)(t,\cdot,z)\|_{L_{v}^2(\D)} \| \gn^{l,-}(t,\cdot,z) \|_{L_{v}^2(\D)}. 
\end{aligned}
\end{equation}  

By the calculation in \eqref{Q_Leibniz}, note that
\begin{equation}\label{QR_Z_split}
 \partial_z^l Q^R(\fe, \fe)(t,v,z) = \text{ L.O.T } + Q^R(\gn^{l}, \fe)(t,v,z) + Q^R(\fe, \gn^{l})(t,v,z), 
\end{equation}  
where \text{``L.O.T"} stands for lower order terms that involve lower order of $z$-derivatives less than $l$.  
Based on \eqref{Q_Leibniz}--\eqref{Q_Leibniz2} and Proposition \ref{regularity} telling us all orders of $\partial_z^l \fe$ is bounded, shown by inequality \eqref{fN_HkZ}, it is obvious to see that $\| \text{ L.O.T } \|_{L^2(\D)} \leq C_{R,L,d,2}(B)\, C_{E,K,l-1}$, with $C_{E,K,l-1}$ 
the same as in \eqref{fN_inequality}. 

Let us now look at the second term on the right-hand-side of Eq.~\eqref{QR_Z_split}, namely
$$Q^R(\gn^{l}, \fe) = Q^{R,+}(\gn^{l}, \fe) - Q^{R,-}(\gn^{l},\fe).$$ For the gain term, multiplying $\gn^{l} \mathbf{1}_{\left\lbrace \gn^{l} \leq 0\right\rbrace}$ and integrating on $\D$ gives us: 
\begin{equation}
\label{QR_plus}
\begin{split}
&\int_{\D} Q^{R,+}(\gn^{l}, \fe)(t,v,z) \gn^{l} \mathbf{1}_{\left\lbrace \gn^{l} \leq 0\right\rbrace}(t,v,z) \,\rd v\\
=& \int_{\D} Q^{R,+}(\gn^{l,+} - \gn^{l,-}, \fe^{+} - \fe^{-})(t,v,z) \gn\mathbf{1}_{\left\lbrace \gn\leq 0\right\rbrace}(t,v,z) \, \rd v \\[2pt]
=& \int_{\D} \left[ Q^{R,+}(\gn^{l,+}, \fe^{+}-\fe^{-}) - Q^{R,+}(\gn^{l,-}, \fe^{+}-\fe^{-})\right](t,v,z) \gn^{l}\mathbf{1}_{\left\lbrace \gn^{l}\leq 0\right\rbrace}(t,v,z) \, \rd v \\[2pt]
=& \int_{\D} \Big[ Q^{R,+}(\gn^{l,+}, \fe^{+}) - Q^{R,+}(\gn^{l,+}, \fe^{-}) - Q^{R,+}(\gn^{l,-},\fe^{+})\\
&\qquad \qquad\qquad \qquad\qquad \qquad\qquad\qquad \qquad\qquad+ Q^{R,+}(\gn^{l,-}, \fe^{-})\Big](t,v,z)\, ( -\gn^{l,-} )(t,v,z) \, \rd v \\[2pt]
=& \int_{\D}\Big[ - Q^{R,+}(\gn^{l,+}, \fe^{+}) + Q^{R,+}(\gn^{l,+}, \fe^{-}) + Q^{R,+}(\gn^{l,-},\fe^{+}) \\[4pt]
&\qquad \qquad\qquad \qquad\qquad \qquad\qquad\qquad \qquad\qquad- Q^{R,+}(\gn^{l,-}, \fe^{-})\Big](t,v,z)\, \gn^{l,-}(t,v,z) \, \rd v 
\end{split}
\end{equation}
so the gain part can be further estimated as follows:
\begin{equation}
    \begin{split}
    &\int_{\D} Q^{R,+}(\gn^{l}, \fe)(t,v,z) \gn^{l} \mathbf{1}_{\left\lbrace \gn^{l} \leq 0\right\rbrace}(t,v,z) \,\rd v\\[4pt]
        \leq & \int_{\D}\left[ Q^{R,+}(\gn^{l,+}, \fe^{-}) + Q^{R,+}(\gn^{l,-},\fe^{+}) \right](t,v,z)\, \gn^{l,-}(t,v,z)\, \rd v \\[4pt]
    \leq &\ \| Q^{R,+}(\gn^{l,+}, \fe^{-})(t,\cdot,z) \|_{L_{v}^2(\D)} \| \gn^{l,-}(t,\cdot,z) \|_{L_{v}^2(\D)} \\[4pt]
    &\qquad\qquad + \| Q^{R,+}(\gn^{l,-},\fe^{+})(t,\cdot,z) \|_{L_{v}^2(\D)} \| \gn^{l,-}(t,\cdot,z) \|_{L_{v}^2(\D)} \\[4pt]
    \leq &\  C_{R,L,d,2}^{+}(B)\, \| \gn^{l,+}(t,\cdot,z) \|_{L_{v}^1(\D)} \| \fe^{-}(t,\cdot,z) \|_{L_{v}^2(\D)} \| \gn^{l,-}(t,\cdot,z) \|_{L_{v}^2(\D)} \\[4pt]
    & \qquad\qquad+ C_{R,L,d,2}^{+}(B)\, \| \gn^{l,-}(t,\cdot,z) \|_{L_{v}^1(\D)} \|\fe^{+}(t,\cdot,z)\|_{L_{v}^2(\D)} \| \gn^{l,-}(t,\cdot,z) \|_{L_{v}^2(\D)} \\[4pt]
    \leq &\ C_{R,L,d,2}^{+}(B)\, \| \gn^{l}(t,\cdot,z) \|_{L_{v}^1(\D)}
    \left| \fe^{-}(t,\cdot,z) \right|^{2}_{L_{v}^2,r} \\[4pt]
    & \qquad\qquad+ \tilde C_{R,L,d,2}^{+}(B)\, \| \fe(t,\cdot,z) \|_{L_{v}^2(\D)} \| \gn^{l,-}(t,\cdot,z) \|^{2}_{L_{v}^2(\D)}, 
    \end{split}
\end{equation}
where \eqref{QGLp1} for the gain operator is used in the second before last inequality. 
For the loss term, we have the decomposition
\begin{equation} \label{QR_neg}
\begin{split}
&- \int_{\D} Q^{R,-}(\gn^{l}, \fe)(t,v,z)  \gn^{l} \mathbf{1}_{\left\lbrace \gn\leq 0\right\rbrace}(t,v,z)\, \rd v \\[4pt]
=& -\int_{\D} L^{R}[\gn^{l}](t,v,z)\fe(t,v,z)\gn^{l,-}(t,v,z)\, \rd v \\[4pt]
=& -\int_{\D} \left( L^{R}[\gn^{l,+}](t,v,z)\fe(t,v,z)\gn^{l,-}(t,v,z) -L^{R}[\gn^{l,-}](t,v,z)\fe(t,v,z)\gn^{l,-}(t,v,z)  \right)\,\rd v \\[4pt]
=& -\int_{\D} \Big( L^{R}[\gn^{l,+}](t,v,z)\fe^{+}(t,v,z)\gn^{l,-}(t,v,z) - L^{R}[\gn^{l,+}](t,v,z)\fe^{-}(t,v,z)\gn^{l,-}(t,v,z) \Big) \,\rd v \\[4pt]
& + \int_{\D}L^{R}[\gn^{l,-}](t,v,z)\fe(t,v,z)\gn^{l,-}(t,v,z)  \,\rd v
\end{split}
\end{equation}
then, it can be further estimated 
\small{
\begin{equation}
    \begin{split}
    &- \int_{\D} Q^{R,-}(\gn^{l}, \fe)(t,v,z)  \gn^{l} \mathbf{1}_{\left\lbrace \gn\leq 0\right\rbrace}(t,v,z)\, \rd v \\[4pt]
        \leq & \int_{\D} L^{R}[\gn^{l,+}](t,v,z)\fe^{-}(t,v,z)\gn^{l,-}(t,v,z)\,\rd v + \int_{\D} L^{R}[\gn^{l,-}](t,v,z)\fe(t,v,z)\gn^{l,-}(t,v,z) \,\rd v\\[4pt]
\leq & {\color{black}{C_{R,L,d}^{-}(B)}}\,  \| \gn^{l}(t,\cdot,z) \|_{L_{v}^1(\D)} \| \fe^{-}(t,\cdot,z) \|_{L_{v}^2(\D)} \| \gn^{l,-}(t,\cdot,z) \|_{L_{v}^2(\D)} \\[6pt]
& + {\color{black}{C_{R,L,d}^{-}(B)}} \, \| \gn^{l,-}(t,\cdot,z) \|_{L_{v}^1(\D)} \| \fe(t,\cdot,z) \|_{L_{v}^2(\D)} \| \gn^{l,-}(t,\cdot,z) \|_{L_{v}^2(\D)} \\[6pt]
\leq & {\color{black}{C_{R,L,d}^{-}(B)}} \, \| \gn^{l}(t,\cdot,z) \|_{L_{v}^{1}(\D)}
\left| \fe^{-}(t,\cdot,z) \right|^{2}_{L_{v}^2,r} + {\color{black}{\tilde C_{R,L,d}^{-}(B)}} \, \| \fe(t,\cdot,z) \|_{L_{v}^2(\D)} \left| \fe^{-}(t,\cdot,z) \right|^{2}_{L_{v}^2,r}\,, 
    \end{split}
\end{equation}
}
where the estimate for the loss term \eqref{QLLp} is used. 

To estimate the third term in Eq.~\eqref{QR_Z_split}, $Q^R(\fe, \gn^{l}) = Q^{R,+}(\fe, \gn^{l}) - Q^{R,-}(\fe, \gn^{l})$, the calculation is almost the same as above and we omit the details here. 
One gets
\begin{equation}\label{QR_plus2}
\begin{split}
&\int_{\D} Q^{R,+}(\fe, \gn^{l})(t,v,z) \gn^{l}\mathbf{1}_{\left\lbrace \gn^{l} 
\leq 0\right\rbrace}(t,v,z) \,\rd v\\  
\leq &\ \int_{\D}\left[ Q^{R,+}(\fe^{+}, \gn^{l,-}) + Q^{R,+}(\fe^{-},\gn^{l,+}) \right](t,v,z)\, \gn^{l,-}(t,v,z) \,\rd v \\[4pt]
\leq &\ C_{R,L,d,2}^{+}(B)\, \| \fe(t,\cdot,z) \|_{L_{v}^1} \| \gn^{l,-}(t,\cdot,z) \|_{L^2(\D)}^2 \\[4pt]
&\qquad + 
{\color{black}{C_{R,L,d,2}^{+}(B)}}\, \| \fe^{-}(t,\cdot,z) \|_{L_{v}^1(\D)} \| \gn^{l}(t,\cdot,z) \|_{L_{v}^2(\D)} \| \gn^{l,-}(t,\cdot,z) \|_{L_{v}^2(\D)} \\[4pt]
\leq & {\color{black}{C_{R,L,d,2}^{+}(B)}}\, \| \fe(t,\cdot,z) \|_{L_{v}^1} | \fe^{-}(t,\cdot,z) |^{2}_{L_{v}^2,r} + {\color{black}{\tilde C_{R,L,d,2}^{+}(B)}}\, \| \gn^{l}(t,\cdot,z) \|_{L_{v}^2(\D)} | \fe^{-}(t,\cdot,z) |^{2}_{L_{v}^2,r}\,. 
\end{split}
\end{equation}  
Moreover,
\begin{multline}\label{QR_neg2}
    - \int_{\D} Q^{R,-}(\fe, \gn^{l})(t,v,z)  \gn^{l}\mathbf{1}_{\left\lbrace \gn^{l}\leq 0\right\rbrace}(t,v,z) \,\rd v \\
	 \leq {\color{black}{C_{R,L,d}^{-}(B)}}\, \| f_N(t,\cdot,z) \|_{L_{v}^1(\D)} \| \gn^{l,-}(t,\cdot,z) \|^{2}_{L_{v}^2(\D)}\,. 
\end{multline}

Combining \eqref{gn_char}, \eqref{EN}, \eqref{QR_plus}--\eqref{QR_neg2}, taking $\sup_{z\in I_z}$ and adding up all $|l| \leq r$, for any $t \in [0,t_0]$, one has
\small{
\begin{equation}
\begin{split}
		\frac{\rd}{\rd t} \left\| \fe^{-}(t,\cdot,z) \right\|_{L_{v}^2,r} 
		& \leq C_{R,L,d,2}^{+,-}(B) \left( \| \fe(t,\cdot,\cdot)\|_{L_{v}^1,r} + \| \fe(t,\cdot,\cdot) \|_{L_{v}^2,r} \right) 
		 \left\| \fe^{-}(t,\cdot,z) \right\|_{L_{v}^2,r}\\[4pt]
		& \qquad + \frac{C_{\mP}C_{R,L,d,1,r}(B)\, K^2_{1,r}(t_0)}{N}\\[4pt]
		& \leq C_{R,L,d,2}^{+,-}(B) \left( E_r + K_{0,r}(t_0)  \right) \left\| \fe^{-}(t,\cdot,z) \right\|_{L_{v}^2,r} + \frac{C_{\mP}C_{R,L,d,1,r}(B)\, K^2_{1,r}(t_0)}{N} \\[4pt]
		& =: \mathcal{C}_{K_{0,r}} \left\| \fe^{-}(t,\cdot,z) \right\|_{L_{v}^2,r}
		+ \frac{\mathcal{C}_{K_{1,r}}}{N}\,, 
	\end{split}
\end{equation} 
}
where $C_{R,L,d,2}^{+,-}(B)$ represents the constant depending on both $C_{R,L,d,2}^{+}(B)$ and $C_{R,L,d}^{-}(B)$. 

\end{proof}


\subsection{Proof of the well-posedness of \texorpdfstring{$f_N$}{fN} with uncertainties}
\label{subsec:wellposed}

\subsubsection{Local well-posedness of \texorpdfstring{$f_N$}{fN} with uncertainties on a small time interval $[t_0, t_0+\tau]$}
\label{subsub:local}

In this subsection, we will show the local well-posedness of the numerical solution $f_N$ by constructing a space $\chi$ where all the $z$-derivatives are included.

\begin{proposition}
\label{localexistence}
Let the truncation parameters $R$, $L$ satisfy \eqref{RL} and the collision kernel $B$ satisfy \eqref{kernel}--\eqref{cutoff} and \eqref{kinetic}--\eqref{Assump_B}. If we further assume that the initial condition $f^0(v,z)$ to the original problem \eqref{ABE} belongs to $L_{v}^{1},r\cap L_v^2,r$ with the following quantities
	\begin{equation}
	E^{f^0}_{r}=\|f^0(\cdot,\cdot)\|_{L^1_v,r},  \quad  D^{f^0}_{0,r}=\left\|f^{0}(\cdot,\cdot)\right\|_{L^2_v,r},
	\end{equation}
	and that the numerical system \eqref{PFS} is evolved from a certain time $t_0$ with
	\begin{equation}\label{fN_IC}
	\|f_N(t_0,\cdot,\cdot)\|_{L^1_v,r} \leq 2 E^{f^0}_{r}, \quad \|f_N(t_0,\cdot,\cdot)\|_{L^2_v,r} \leq K_{0,r},
	\end{equation}
	then there exists a local time $ \tau$ such that \eqref{PFS} admits a unique solution $ f_{N}= f_{N}(t,v,z) \in L_{v}^{1},r \cap L_v^2,r $ on $ [t_0,t_0+\tau]$. In particular, one can choose
	\begin{equation} \label{tau}
		\tau=\frac{1}{2(\bar{C}_1 \bar{D}+\bar{C}_2\bar{E})}, \quad \text{with} \quad \bar{E} = 4E^{f^0}_{r},  \quad \bar{D} = 2K_{0,r},
	\end{equation}
	such that
	\begin{equation}
		\forall t\in [t_0,t_0+\tau], \quad  \|f_N(t,\cdot,\cdot)\|_{L^1_v,r} \leq \bar{E}, \quad  \|f_N(t,\cdot,\cdot)\|_{L^2_v,r} \leq \bar{D},
	\end{equation}
	where $T$ is the final prescribed time and the constants $\bar{C}_1$, $\bar{C}_2$ only depend on the truncation parameters $R$, $L$, dimension $d$, and the collision kernel $B$.
\end{proposition}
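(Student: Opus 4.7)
The plan is to apply the Banach fixed-point theorem to the Duhamel reformulation of \eqref{PFS} on the interval $[t_0, t_0+\tau]$. Concretely, I would set
\[
\mathcal{T}[f](t,v,z) := f_N(t_0,v,z) + \int_{t_0}^{t} \mathcal{P}_N Q^R(f,f)(s,v,z)\,\rd s,
\]
so that fixed points of $\mathcal{T}$ are exactly solutions of \eqref{PFS} emanating from $f_N(t_0,\cdot,\cdot)$. The natural search space, which encodes both initial bounds in \eqref{fN_IC}, is
\[
\chi := \Bigl\{ f\in C\bigl([t_0, t_0+\tau];\, L_v^1,r \cap L_v^2,r\bigr) :\ \|f(t,\cdot,\cdot)\|_{L_v^1,r}\leq \bar{E},\ \|f(t,\cdot,\cdot)\|_{L_v^2,r}\leq \bar{D}\Bigr\},
\]
equipped with the metric $d(f,g) := \sup_{t\in[t_0, t_0+\tau]}\|f(t,\cdot,\cdot)-g(t,\cdot,\cdot)\|_{L_v^2,r}$, under which $\chi$ is complete.

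For the self-mapping $\mathcal{T}(\chi)\subseteq \chi$, Corollary \ref{sumQz} (with $p=2$) together with the $L_v^2$-boundedness of $\mathcal{P}_N$ (which commutes with $\partial_z^l$ for each $|l|\leq r$, hence is bounded on $\|\cdot\|_{L_v^2,r}$ as well) yields
\[
\|\mathcal{T}[f](t)\|_{L_v^2,r}\leq K_{0,r} + \tau\,\bar{C}_2\,\bar{E}\,\bar{D},
\]
which stays below $\bar{D}=2K_{0,r}$ provided $\tau \leq 1/(2\bar{C}_2\bar{E})$. For the $L_v^1,r$ bound the projection $\mathcal{P}_N$ is not bounded on $L^1$, so I would pass through the Cauchy--Schwarz embedding $\|h\|_{L_v^1(\mathcal{D}_L)}\leq |\mathcal{D}_L|^{1/2}\|h\|_{L_v^2(\mathcal{D}_L)}$ uniformly in $|l|\leq r$ and in $z$, obtaining
\[
\|\mathcal{T}[f](t)\|_{L_v^1,r}\leq 2E^{f^0}_r + \tau\,\bar{C}_1\,\bar{E}\,\bar{D},\qquad \bar{C}_1 := |\mathcal{D}_L|^{1/2}\bar{C}_2,
\]
which stays below $\bar{E}=4E^{f^0}_r$ provided $\tau \leq 1/(2\bar{C}_1 \bar{D})$. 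Both constraints are dominated by the prescribed value \eqref{tau}.

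For the contraction, the bilinearity identity $Q^R(f,f)-Q^R(g,g)=Q^R(f-g,f)+Q^R(g,f-g)$ and Corollary \ref{sumQz} give
\[
\|\mathcal{T}[f](t)-\mathcal{T}[g](t)\|_{L_v^2,r}\leq \tau\,\bar{C}_2 \bigl(\bar{D}\,\|f-g\|_{L_v^1,r} + \bar{E}\,\|f-g\|_{L_v^2,r}\bigr),
\]
and dominating the $L_v^1,r$ factor by $|\mathcal{D}_L|^{1/2}$ times the $L_v^2,r$ factor (as above) produces
\[
d(\mathcal{T}[f],\mathcal{T}[g]) \leq \tau\bigl(\bar{C}_1 \bar{D} + \bar{C}_2 \bar{E}\bigr)\,d(f,g).
\]
With $\tau$ as in \eqref{tau}, this contraction factor equals $\tfrac{1}{2}$, and the Banach fixed-point theorem delivers the unique $f_N \in \chi$ on $[t_0, t_0+\tau]$.

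The main obstacle, and the reason for the careful joint $L_v^1 \cap L_v^2$ set-up, is the asymmetry between the behavior of $\mathcal{P}_N$ on $L^2$ (where it is an orthogonal projection) and on $L^1$ (where it is unbounded as $N\to\infty$). The device of absorbing the harmless constant $|\mathcal{D}_L|^{1/2}$ from the $L_v^2\hookrightarrow L_v^1$ embedding on the bounded torus keeps the bounds $N$-independent and allows both norms to be controlled by a single fixed-point iteration. A secondary technical matter is uniformity over the multi-index $|l|\leq r$ of $z$-derivatives, but this is already absorbed into the summed norms $\|\cdot\|_{L_v^p,r}$ of Corollary \ref{sumQz}, so once the deterministic argument of \cite{HQY21} is transplanted onto these uncertainty-adapted spaces it requires no new analytic input.
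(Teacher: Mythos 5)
Your proposal is correct and follows essentially the same route as the paper: the same Duhamel/fixed-point operator, the same space $\chi$ with simultaneous $L_v^1,r$ and $L_v^2,r$ bounds metrized by the $L_v^2,r$ distance, the same use of Corollary \ref{sumQz} plus the $L_v^2\hookrightarrow L_v^1$ embedding constant $(2L)^{d/2}$ to define $\bar{C}_1=(2L)^{d/2}\bar{C}_2$, and the same bilinearity splitting for the contraction estimate. The constants and the verification of the self-mapping and contraction conditions match the paper's choice of $\bar{E}$, $\bar{D}$, and $\tau$ in \eqref{tau}.
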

\begin{proof}	
	The main strategy is to use the Banach fixed point theorem to prove that the numerical system \eqref{PFS} admits a solution $f_N$ in our designated space $\chi$ defined based on our newly-established norm $\|\cdot\|_{L^{1}_{v},r}$ and $\|\cdot\|_{L^{2}_{v},r}$: with undermined constants $ \bar{E}, \bar{D}> 0$ and small enough time $ \tau >0 $ that will be specified later,
	\begin{equation}
	\begin{split}
	    \chi= \Big\lbrace  f\in L^{\infty}([t_0,t_0+\tau]; L^{1}_{v},r \cap L^{2}_{v},r): &\sup\limits_{t\in [t_0,t_0+\tau]}\left\|f(t,\cdot,\cdot)\right\|_{L^{1}_{v},r} \leq \bar{E},\\
     &\sup\limits_{t\in [t_0,t_0+\tau]}\left\|f(t,\cdot,\cdot)\right\|_{L^{2}_{v},r} \leq \bar{D} \Big\rbrace,
	\end{split}
	\end{equation}
	which is a complete metric space with respect to the induced distance: 
	\begin{equation}\label{distance}
		d(f, \tilde{f}) : = \left\| f - \tilde{f} \right\|_{\chi} =\sup\limits_{t\in [t_0,t_0+\tau]}  \left\| f(t,\cdot,\cdot) - \tilde{f}(t,\cdot,\cdot) \right\|_{L^{2}_{v},r}.
	\end{equation}
	For any $f_{N} \in \chi$, we can define the operator $ \Phi[f_{N}] $ for any $t\in[t_0,t_0+\tau]$,
	\begin{equation}
		\Phi[f_{N}](t,v,z) = f_{N}(t_0,v,z) + \int_{t_0}^{t}\mP_N Q^R(\fe,\fe)(s,v,z) \,\rd s.
	\end{equation}
	
	We proceed to show that there exists a unique fixed point in $\chi$ for the mapping $\Phi$. 
	
	Step (I): We first show that $ \Phi $ maps $ \chi$ into itself: $\Phi[f_{N}] \in \chi$, i.e., for any $ f_{N} \in \chi $ and $t\in [t_0,t_0+\tau]$,
	\begin{equation*} 
		\begin{split}
			&\left\| \Phi[f_{N}](t,\cdot,\cdot)\right\|_{L^{1}_{v},r}\\[4pt]
            \leq & \left\| f_{N}(t_0,\cdot,\cdot) \right\|_{L^{1}_{v},r} + \int_{t_0}^{t} \left\|  \mP_N Q^{R}(f_{N},f_{N})(s,\cdot,\cdot)\right\|_{L^{1}_{v},r} \,\rd s \\[4pt]
			\leq & \left\| f_{N}(t_0,\cdot,\cdot) \right\|_{L^{1}_{v},r}  + \tau (2L)^{d/2} \sup\limits_{t\in [t_0,t_0+\tau]} \left\|\mP_N Q^R(f_{N},f_{N})(t,\cdot,\cdot)\right\|_{L^{2}_{v},r} \\[4pt]
			\leq & \left\| f_{N}(t_0,\cdot,\cdot)  \right\|_{L^{1}_{v},r} + \tau C''_{R,L,d,2,r}(B) (2L)^{d/2} \sup\limits_{t\in [t_0,t_0+\tau]}\left( \left\| f_{N}(t,\cdot,\cdot) \right\|_{L^{1}_{v},r} \left\| f_{N}(t,\cdot,\cdot) \right\|_{L^{2}_{v},r}\right) \\[4pt]
			\leq & \left\|f_{N}(t_0,\cdot, \cdot)  \right\|_{L^{1}_{v},r}  + \tau C''_{R,L,d,2,r}(B) (2L)^{d/2} \bar{E} \bar{D}, 
		\end{split}
	\end{equation*}
	where the estimate \eqref{QHKZ||} is used in third inequality above. Similarly,
	\begin{equation*} 
		\begin{split}
			&\left\| \Phi[f_{N}](t,\cdot,\cdot)\right\|_{L^{2}_{v},r} \\[4pt]
            \leq & \left\| f_{N}(t_0,\cdot,\cdot) \right\|_{L^{2}_{v},r} + \int_{t_0}^{t} \left\|  \mP_N Q^{R}(f_{N},f_{N})(s,\cdot,\cdot)\right\|_{L^{2}_{v},r} \,\rd s \\[4pt]
			\leq & \left\| f_{N}(t_0,\cdot,\cdot) \right\|_{L^{2}_{v},r}  + \tau \sup\limits_{t\in [t_0,t_0+\tau]} \left\|\mP_N Q^R(f_{N},f_{N})(t,\cdot,\cdot)\right\|_{L^{2}_{v},r} \\[4pt]
			\leq & \left\| f_{N}(t_0,\cdot,\cdot) \right\|_{L^{2}_{v},r} +  \tau C''_{R,L,d,2,r}(B)  \sup\limits_{t\in [t_0,t_0+\tau]}\left( \left\| f_{N}(t,\cdot,\cdot) \right\|_{L^{1}_{v},r} \left\| f_{N}(t,\cdot,\cdot) \right\|_{L^{2}_{v},r}\right) \\[4pt]
			\leq & \left\|f_{N}(t_0,\cdot,\cdot)  \right\|_{L^{2}_{v},r}  + \tau C''_{R,L,d,2,r}(B) \bar{E} \bar{D}, 
		\end{split}
	\end{equation*}
	
	Step (II): We next show that $ \Phi $ is a contraction mapping on $ \chi $. For any $f_{N}, \tilde{f}_{N} \in\chi$ with the same initial datum $ f_{N}(t_0,v,z)$, we have
	\begin{equation*} 
		\begin{split}
			\left\|\Phi[f_{N}]-\Phi[\tilde{f}_{N}] \right\|_{\chi} =& \sup\limits_{t\in [t_0,t_0+\tau]} \left\|\Phi[f_{N}](t,\cdot,\cdot)-\Phi[\tilde{f}_{N}](t,\cdot,\cdot)\right\|_{L^{2}_{v},r}\\[4pt]
			\leq & \sup\limits_{t\in [t_0,t_0+\tau]}\int_{t_0}^{t}  \left\|\mP_N Q^R(f_{N},f_{N})(s,\cdot, \cdot)-\mP_N Q^R(\tilde{f}_{N},\tilde{f}_{N})(s, \cdot, \cdot)\right\|_{L^{2}_{v},r} \,\rd s \\[4pt]
			\leq & \tau \sup\limits_{t\in [t_0,t_0+\tau]}  \left\|Q^R(f_{N},f_{N})(t, \cdot,\cdot)-Q^R(\tilde{f}_{N},\tilde{f}_{N})(t,\cdot,\cdot)\right\|_{L^{2}_{v},r} \\[4pt]
			\leq & \tau \sup\limits_{t\in [t_0,t_0+\tau]} \left( \left\|Q^R(f_{N}-\tilde{f}_{N},f_{N})(t, \cdot,\cdot)\right\|_{L^{2}_{v},r} + \left\|Q^R(\tilde{f}_{N},f_{N}-\tilde{f}_{N})(t,\cdot, \cdot)\right\|_{L^{2}_{v},r}\right )\\[4pt]
			\leq & \tau C''_{R,L,d,2,r}(B) \sup\limits_{t\in [t_0,t_0+\tau]} \Big( \left\|f_{N}(t,\cdot,\cdot) -\tilde{f}_{N}(t,\cdot,\cdot)\right\|_{L^{1}_{v},r} \|f_{N}(t,\cdot,\cdot)\|_{L^{2}_{v},r}\\[4pt]
			&\qquad \qquad \qquad \qquad \qquad \quad  + \left\|f_{N}(t,\cdot,\cdot)-\tilde{f}_{N}(t,\cdot,\cdot) \right\|_{L^{2}_{v},r} \|\tilde{f}_{N}(t,\cdot,\cdot)\|_{L^{1}_{v},r} \Big)\\[4pt]
			\leq & \tau C''_{R,L,d,2,r}(B) ((2L)^{d/2}\bar{D}+\bar{E}) \left(\sup\limits_{t\in [t_0,t_0+\tau]}  \left\|f_{N}(t,\cdot,\cdot)-\tilde{f}_{N}(t,\cdot, \cdot)\right\|_{L^{2}_{v},r}\right)\\[4pt]
			=&  \tau \left(C''_{R,L,d,2,r}(B)(2L)^{d/2}\bar{D} + C''_{R,L,d,2,r}(B)\bar{E}\right)\left\|f_{N}-\tilde{f}_{N}\right\|_{\chi}.
		\end{split}
	\end{equation*}
	Therefore, in order to apply the Banach fixed point theorem, we have to prove that the operator $\Phi: \chi \rightarrow \chi$ is a contraction mapping, in the sense that, if we set $\bar{C}_1=C''_{R,L,d,2,r}(B)(2L)^{d/2}$, $\bar{C}_2=C''_{R,L,d,2,r}(B)$,
    \begin{equation}
    \begin{cases}
		\left\| f_{N}(t_0,\cdot,\cdot) \right\|_{L^1_v,r} + \tau \bar{C}_1 \bar{E} \bar{D} \leq \bar{E}, \\[4pt]
		\left\|f_{N}(t_0,\cdot,\cdot) \right\|_{L^2_v,r} + \tau \bar{C}_2 \bar{E} \bar{D} \leq \bar{D},\\[4pt]
        \tau(\bar{C}_1 \bar{D}+\bar{C}_2\bar{E}) < 1,
	\end{cases}
    \end{equation}
	which can actually be verified if we choose $\bar{D}$, $\bar{E}$ and $\tau$ as given in \eqref{tau}. Thus, there exists a unique solution on $[t_0,t_0+\tau]$ to the numerical system \eqref{PFS}.
\end{proof}

\begin{remark}
In addition to proving the local-wellposedness in a certain space where all the $z$-derivatives are considered, we can also provide a new iteration scheme to show the local well-posedness of each $g^{l}_N(t,v,z) = \partial_z^l f_N(t,v,z) $ by taking $l$-th order $z$-derivative on both sides of $\partial_t f_N = \mathcal{P}_N Q^R(f_N, f_N)$, i.e., $g^{l}_N = \partial_z^l f_N $ satisfies the numerical system:
\begin{equation}
   \partial_t \partial_z^l f_N(t,v,z) = \mathcal{P}_N \partial_z^l Q^R(f_N,f_N)(t,v,z). 
\end{equation}
for fixed $z\in I_z$. 
\end{remark}

\subsubsection{Well-posedness of \texorpdfstring{$f_N$}{fN} with uncertainties on an arbitrary bounded time interval \texorpdfstring{$[0,T]$}{[0,T]}}
\label{subsub:main}

In this subsection, we are now ready to present our main result about the well-posedness of the numerical solution $f_{N}$ with uncertainties within any arbitrarily prescribed time interval.

\begin{theorem}\label{existencetheorem}
	Let the truncation parameters $R$, $L$ satisfy \eqref{RL} and the collision kernel $B$ satisfy \eqref{kernel}--\eqref{cutoff} and  \eqref{kinetic}--\eqref{Assump_B}.
	If the initial condition $f^{0}(v,z)$ in \eqref{ABE} and the initial approximation $f_N^0(v,z)$ in \eqref{PFS} satisfy the assumptions specified in Section~\ref{subsec:initial}.
	
	Then there exists an integer $N_0$ depending on the prescribed final time $T$ and initial condition $f^0$, such that for all $ N>N_{0} $, the numerical system \eqref{PFS} admits a unique solution $ \fe = \fe(t,\cdot,\cdot) \in L^1_v,r \cap H^1_v,r $ on the time interval $ [0,T] $ with the following estimates:
	\begin{equation}\label{fNL1L2}
	\forall t\in [0,T], \quad \left\| f_{N}(t,\cdot,\cdot) \right\|_{L^1_v,r} \leq 2E^{f^0}_r, \quad \left\| f_{N}(t,\cdot,\cdot) \right\|_{L^2_v,r} \leq K_{0,r}(T),
	\end{equation}
	where $E^{f^0}_{r}=\|f^0(\cdot,\cdot)\|_{L^{1}_{v},r}$ and $K_{0,r}(T)$ is the constant depending on prescribed time $T$, the truncation parameters $R,L$, dimension $d$, the collision kernel $B$ and initial quantity $D^{f^0}_{0,r}=\|f^0(\cdot,\cdot)\|_{L^{2}_{v},r}$.
\end{theorem}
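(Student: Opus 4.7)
The plan is to extend the local result of Proposition \ref{localexistence} to the full interval $[0,T]$ by an iterative procedure. The crucial observation built into Proposition~\ref{localexistence} is that the local existence time $\tau$ depends only on $\bar{E}$, $\bar{D}$ (and the structural constants $\bar{C}_1,\bar{C}_2$). Hence, if at every checkpoint $t=k\tau$ the solution satisfies the same $L^1_v,r$ and $L^2_v,r$ bounds as at $t=0$, the same $\tau$ can be used again, and $\lceil T/\tau\rceil$ iterations cover $[0,T]$. So the whole game is to show that at the endpoint $t=\tau$ of each iteration the two bounds $\|f_N\|_{L^1_v,r}\leq 2E^{f^0}_r$ and $\|f_N\|_{L^2_v,r}\leq K_{0,r}(T)$ are restored for $N$ sufficiently large.

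\textbf{Propagation of the $L^2_v,r$ bound.} This is essentially free once the $L^1_v,r$ bound is in hand. Assuming inductively $\|f_N(t)\|_{L^1_v,r}\leq 2E^{f^0}_r$ on the interval constructed so far, I invoke Corollary~\ref{regularity1} with $E_r=2E^{f^0}_r$ and $D^{f^0}_{0,r}=\|f^0\|_{L^2_v,r}$ to conclude $\|f_N(t)\|_{L^2_v,r}\leq K_{0,r}(T)$ on that interval. Note that $K_{0,r}(T)$ is defined using the final time $T$, so it is the uniform bound valid throughout, which is exactly what the theorem asserts.

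\textbf{Restoration of the $L^1_v,r$ bound.} This is the essential step and the reason the threshold $N_0$ is needed. Taking $\partial_z^l$ of the identity in Lemma~\ref{lemma:conv} (which commutes with integration in $v$), each $z$-derivative of $f_N$ preserves its $v$-integral,
\begin{equation*}
\int_{\D}\partial_z^l f_N(t,v,z)\,\rd v=\int_{\D}\partial_z^l f_N^0(v,z)\,\rd v,\qquad |l|\leq r.
\end{equation*}
Writing $\partial_z^l f_N=(\partial_z^l f_N)^+-(\partial_z^l f_N)^-$, one gets the standard identity $\int|\partial_z^l f_N|=\int\partial_z^l f_N+2\int(\partial_z^l f_N)^-$, and therefore, after summing over $|l|\leq r$ and taking $\sup_{z\in I_z}$,
\begin{equation*}
\|f_N(t)\|_{L^1_v,r}\leq \|f_N^0\|_{L^1_v,r}+2(2L)^{d/2}\,\|f_N^{-}(t)\|_{L^2_v,r}.
\end{equation*}
By the assumptions of Section~\ref{subsec:initial} lifted to the $r$-norms, the initial term is bounded by (say) $\tfrac32 E^{f^0}_r$ for $N$ large. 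The second term is controlled by the negative-part estimate \eqref{fe_minus}, which gives $\|f_N^{-}(t)\|_{L^2_v,r}\leq e^{\mathcal{C}_{K_{0,r}}T}\bigl(\|f_N^{0,-}\|_{L^2_v,r}+\mathcal{C}_{K_{1,r}}/N\bigr)$; both summands can be made arbitrarily small using assumption~(iv) of Section~\ref{subsec:initial} and by taking $N$ large enough. Choosing $N_0$ so that the combined bound is at most $\tfrac12 E^{f^0}_r/(2L)^{d/2}$ yields $\|f_N(t)\|_{L^1_v,r}\leq 2E^{f^0}_r$, closing the induction. Uniqueness on $[0,T]$ then follows from the uniqueness provided by Proposition~\ref{localexistence} on each subinterval.

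\textbf{Main obstacle.} The delicate point is not any individual estimate but the bootstrapping issue: the constants $\mathcal{C}_{K_{0,r}}$ and $\mathcal{C}_{K_{1,r}}$ appearing in \eqref{fe_minus} depend on a priori $L^1_v,r$ and $H^1_v,r$ bounds of $f_N$, which are exactly what we are trying to propagate. One must therefore set up the induction so that at each step the a priori bounds used to apply \eqref{fe_minus} are already established on the subinterval under consideration; the resulting threshold $N_0$ then inevitably depends on $T$ through the factor $e^{\mathcal{C}_{K_{0,r}}T}$, which is the reason the statement only claims the existence of some $N_0=N_0(T,f^0)$ rather than a universal one.
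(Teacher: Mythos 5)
Your proposal is correct and follows essentially the same route as the paper: iterate the local well-posedness result with a fixed $\tau$, propagate the $L^2_v,r$ bound via Corollary~\ref{regularity1}, and restore the $L^1_v,r$ bound at each step by combining mass conservation of each $z$-derivative with the negative-part estimate \eqref{fe_minus} and assumption (iv), choosing $N_0$ large enough (depending on $T$ through the factor $e^{\mathcal{C}_{K_{0,r}}T}$). The only cosmetic difference is your split of the budget ($\tfrac32 E^{f^0}_r$ for the conserved-mass term versus the paper's exact bound $E^{f^0}_r$ from \eqref{con(a)}), which does not affect the argument.
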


\begin{proof}
The key part of the proof is to extend the local well-posedness result in Proposition \ref{localexistence} to any arbitrarily prescribed time interval $[0,T]$ by time iteration.

Step (I): We start with the initial time $t=0$, by using condition \eqref{con(c)}, we are able to choose $N_1$ such that for any $N \geq N_1$,
	\begin{equation} \label{initial}
	\|f^0_N(\cdot,\cdot)\|_{L^{1}_{v},r} \leq 2E^{f^0}_r.
	\end{equation}
Moreover, we have $\|f^0_N(\cdot,\cdot)\|_{L^{2}_{v},r} \leq D^{f^0}_{0,r} \leq K_{0,r}(T)$ due to the condition \eqref{con(b)} and $K_{0,r}(T)$ can be explicitly given by prescribed time $T$, the truncation parameters $R,L$, collision kernel $B$ and initial quantity $D^{f^0}_{0,r}$ by noticing Corollary \ref{regularity1}. 

Then, by applying Proposition~\ref{localexistence}, there exists a unique solution $f_N(t,\cdot,\cdot)\in L^{1}_{v},r \cap L^{2}_{v},r$ over the local time interval $[0,\tau]$ with the following estimate,
\begin{equation}
\forall t\in [0,\tau], \quad \|f_N(t,\cdot,\cdot)\|_{L^{1}_{v},r}\leq 4E^{f^0}_r.
\end{equation}
Furthermore, by taking advantage of the boundedness in $L^{1}_{v},r$ and that $f_N^0(\cdot,\cdot)\in {H^1_v,r}$ from \eqref{con(b)}, we can invoke Corollary~\ref{regularity1} to find the $L^{2}_{v},r$- and $H^1_v,r$- estimates of the numerical solution $f_N$ in the local time interval
\begin{equation}
\forall t\in [0, \tau], \quad \|f_N(t,\cdot,\cdot)\|_{L^{2}_{v},r}\leq K_{0,r}(\tau), \quad \|f_N(t,\cdot,\cdot)\|_{H^1_v,r} \leq K_{1,r}(\tau),
\end{equation}
as well as the estimate of the negative part $f^{}_{N}$ in $L^{2}_{v},r$,
\begin{equation}\label{fN-}
\begin{split}
\forall t\in [0, \tau], \quad \left\| \fe^{-}(t,\cdot, \cdot) \right\|_{L^2_{v},r} \leq e^{\mathcal{C}_{K_{0,r}}\, \tau} \left( \| \fe^{0,-}(\cdot, \cdot) \|_{L^2_{v},r} + \frac{\mathcal{C}_{K_{1,r}}}{N} \right).
\end{split}
\end{equation}

On the other hand, noticing that $|\partial_z^{l}f_N(t,v,z)| = 2\partial_z^{l}f_N^-(t,v,z) + \partial_z^{l}f_N(t,v,z)$, we have
\begin{equation}\label{nL1}
	\begin{split}
		\|f_N(t,\cdot,\cdot)\|_{L^{1}_{v},r}= &\sum_{|l|\leq r} \sup_{z \in I_z} \int_{\D}|\partial_z^{l} f_N(t,v,z)|\,\rd{v}\\[4pt]
		=&2 \sum_{|l|\leq r} \left(\sup_{z \in I_z}\int_{\D}\partial_z^{l} f_N^-(t,v,z)\,\rd{v}+\sup_{z \in I_z}\int_{\D}\partial_z^{l}f_N(t,v,z)\,\rd{v} \right)\\[4pt]
		=&2 \sum_{|l|\leq r} \sup_{z \in I_z} \|\partial_z^{l} f_N^-(t,\cdot,z)\|_{L^{1}_{v}}+ \sum_{|l|\leq r} \sup_{z \in I_z} \int_{\D}\partial_z^{l} f^0(v,z)\,\rd{v}\\[4pt]
		\leq& 2(2L)^{d/2} \|f_N^-(t,\cdot,\cdot)\|_{L^{2}_{v},r}+ E^{f^0}_{r},
	\end{split}
\end{equation}
where we used the important mass conservation property in Lemma~\ref{lemma:conv} and the assumption \eqref{con(a)}. 

Therefore, observing the estimate \eqref{nL1}, it implies that $\|f_N(t,\cdot,\cdot)\|_{L^1_v,r}$ will be under control, if we have a "good" estimate for the negative part $\|f_N^-(t,\cdot,\cdot)\|_{L^2_v,r}$.
Then, thanks to the estimate \eqref{fN-}, we can simply choose $N_2$ large enough such that for all $N \geq N_2$ we can define the quantity $\bar{K}$ to satisfy
\begin{equation}\label{N0}
\begin{split}
\bar{K}:= e^{ \mathcal{C}_{K_{0,r}}\, T} \left( \| \fe^{0,-}(\cdot, \cdot) \|_{L^2_{v},r} + \frac{\mathcal{C}_{K_{1,r}}}{N} \right) \leq \frac{E^{f^0}_r}{2(2L)^{d/2}},
\end{split}
\end{equation}
where the inequality above is possible to achieve thanks to the condition \eqref{con(d)} and always holds for any 
$t_0\leq T$, as the quantity $\bar{K}$ is an increasing function with respect to time.

Hence, by combining \eqref{fN-}-\eqref{nL1}, we find that
\begin{equation} \label{fNL1L21}
\forall t\in [0,\tau], \quad \|f_N(t,\cdot,\cdot)\|_{L^{1}_{v},r}\leq 2E^{f^0}_r.
\end{equation}

By choosing $N_0$ as the maximum of among the $N_1$ determined to satisfy \eqref{initial} and $N_2$ to satisfy \eqref{N0}, we have found such an integer $N_0$, depending only on the prescribed final time $T$ and initial condition $f^0$, that for all $N > N_0$, the numerical system \eqref{PFS} admits a unique solution $f_N(t,\cdot,\cdot)\in L^{2}_{v},r \cap H^1_v,r$ in the time interval $[0,\tau]$, which satisfies \eqref{fNL1L21}. 

Step (II): By letting $t=\tau$ as a new initial time, we need to check if the local well-posedness can be extended in the equally-long time interval $[\tau,2\tau]$. Recalling Step (I), we have already proved that
\begin{equation}\label{initial1}
\forall t\in [0,\tau], \quad f_N(t,\cdot,\cdot)\in L^{1}_{v},r \cap H^1_v,r \quad \text{and}\quad \|f_N(t,\cdot,\cdot)\|_{L^{1}_{v},r}\leq 2E^{f^0}_r.
\end{equation}
Then by taking $k=0$ in the Proposition~\ref{regularity}, we have the following estimate for the propagation of $\|f_N(t,\cdot,\cdot)\|_{L^{2}_{v},r}$,
\begin{equation}\label{initial2}
\begin{split}
    \forall t \in [0,\tau],  \quad \|f_N(t,\cdot,\cdot)\|_{L^{2}_{v},r} \leq K_{0,r}(\tau) \leq K_{0,r}(T)
\end{split}
\end{equation}
From the estimate \eqref{initial1} - \eqref{initial2}, we find that $\|f_N(\tau,\cdot,\cdot)\|_{L^{1}_{v},r}$ and $\|f_N(\tau,\cdot,\cdot)\|_{L^{2}_{v},r}$ satisfy the condition of the local well-posedness Proposition~\ref{localexistence}, which allows us to apply Proposition~\ref{localexistence} starting from $t=\tau$ and further obtain that 
there exists a unique solution $f_N(t,\cdot,\cdot)\in L^{1}_{v},r \cap L^{2}_{v},r$ on $[\tau, 2\tau]$ with
\begin{equation}
\forall t\in [\tau, 2\tau], \quad \|f_N(t,\cdot,\cdot)\|_{L^{1}_{v},r}\leq 4E^{f^0}_r.
\end{equation}

Meanwhile, noticing the bounded property in $L^{1}_{v},r$ above and the fact that $f_N^0(\cdot,\cdot) \in {H^1_v,r}$, we can invoke the Corollary~\ref{regularity1} over the interval $[0,2\tau]$ to derive that
\begin{equation}
\forall t\in [0, 2\tau], \quad \|f_N(t,\cdot,\cdot)\|_{L^{2}_{v},r}\leq K_{0,r}(2\tau), \quad \|f_N(t,\cdot,\cdot)\|_{H^1_v,r}\leq K_{1,r}(2\tau),
\end{equation}
and for any $t\in [0,2\tau]$, 
\begin{equation} 
\begin{split}
		 \left\|\fe^{-}(t,\cdot,\cdot)\right\|_{L^{2}_v,r}  \leq \e^{ \mathcal{C}_{K_{0,r}}\, (2\tau)} \left( \| \fe^{0,-}(\cdot, \cdot) \|_{L^2_{v},r} + \frac{\mathcal{C}_{K_{1,r}}}{N} \right) \leq \bar{K}
\end{split}
\end{equation}
i.e., the same choice of $N$ chosen above would still make
\begin{equation} 
\forall t\in [0,2\tau], \quad \|f_N(t,\cdot,\cdot)\|_{L^{1}_{v},r}\leq 2E^{f^0}_r.
\end{equation}
That is, at time point $t=2\tau$, we are back to the situation \eqref{initial1} at $t=\tau$. In fact, we can generalize the same strategy to longer time interval $[0,n\tau]$ with the same choice $N\geq N_0$ and $\bar{K}$, in the sense that, for all $t\in [0, n\tau]$,
\begin{equation}
    \begin{split}
        \|f_N(t,\cdot,\cdot)\|_{L^{2}_{v},r}\leq &  K_{0,r}(n\tau), \quad \|f_N(t,\cdot,\cdot)\|_{H^1_v,r}\leq K_{1,r}(n\tau),\\
        \left\|\fe^{-}(t,\cdot,\cdot)\right\|_{L^{2}_v,r} \leq & \bar{K}, \qquad\qquad \|f_N(t,\cdot,\cdot)\|_{L^{1}_{v},r}\leq 2E^{f^0}_r.
    \end{split}
\end{equation}

Step (III): Repeating Step (II) for $n$ times until the time interval $[0,n\tau]$ covers the prescribed interval $[0,T]$, we can show that there exists a unique solution $f_N(t,\cdot,\cdot)\in L^{1}_{v},r \cap H^1_v,r$ on $[0,T]$ with
\begin{equation}
\forall t\in [0,T], \quad \|f_N(t,\cdot,\cdot)\|_{L^1_v,r}\leq 2E^{f^0}_r, \quad \left\| f_{N}(t,\cdot,\cdot) \right\|_{L^{2}_v,r} \leq K_{0,r}(T),
\end{equation}
where the estimate of $\left\| f_{N}(t,\cdot,\cdot) \right\|_{L^{2}_v,r}$ can be verified by taking $k=0$ in Proposition~\ref{regularity} once again.
\end{proof}

\section{Convergence of the Fourier-Galerkin spectral method for Boltzmann equation with uncertainties}
\label{sec:conv}

In this section, we will prove the convergence of the proposed spectral method by taking advantage of the well-posedness and stability of the numerical solution $f_N$ established in the previous section.

For the continuous system \eqref{ABE} with a periodic, non-negative (in $v$) initial condition $f^0(v,z) $ in $ L^{1}_{v},r \cap H^{k}_{v},r$ for some integer $k \geq 1$, 
there exists a unique global non-negative solution $f(t,v,z)\in H^{k}_{v},r$ with the estimate that $\|f(t,\cdot,\cdot)\|_{H^{k}_{v},r}\leq C_{k,r}$ for all $t \geq 0$ with $C_{k,r}$ a constant depending on initial datum $f^0$, which can be shown by following the similar argument to handle the deterministic model as in \cite[Proposition 5.1]{FM11} coupled with our estimate of the collision operator including uncertainties in Corollary \ref{sumQz}.

For the numerical system \eqref{PFS}, we consider the initial condition $f_N(0,v,z)=\mP_N f^{0}(v,z)$, which satisfies the four conditions \eqref{con(a)}--\eqref{con(d)}(e.g., $f^{0}(v,z)$ is a H\"{o}lder continuous in $v$).
Then, by Theorem~\ref{existencetheorem}, there exists a unique solution $f_N(t,\cdot,\cdot) \in L^{1}_{v},r \cap H^{k}_{v},r$ over the whole time interval $[0,T]$ with $\|f_N(t,\cdot,\cdot)\|_{L^{2}_{v},r}\leq K_{0,r}(T)$ for all $t \in[0,T]$.

Define the error function $e_{N}$ with uncertainties
\begin{equation}
e_{N}(t,v,z) := \mathcal{P}_{N} f(t,v,z) - f_{N}(t,v,z).
\end{equation}
We can show the following main theorem of the convergence,
\begin{theorem}\label{spectralaccuracy}
Let the truncation parameters $R$, $L$ satisfy \eqref{RL} and the collision kernel $B$ satisfy \eqref{kernel}--\eqref{cutoff} and  \eqref{kinetic}--\eqref{Assump_B}. For a periodic, non-negative (in $v$) initial condition $f^0(v,z) $ in $ L^{1}_{v},r \cap H^{k}_{v},r$ for some integer $k \geq 1$, choose $N_0$ to satisfy the condition in Theorem~\ref{existencetheorem}, then the numerical system \eqref{PFS} by Fourier-Galerkin spectral method is convergent for all $N>N_0$ and exhibits spectral accuracy in the sense that
\begin{equation}\label{final}
\forall t\in [0,T], \quad \left\| e_{N}(t,\cdot,\cdot) \right\|_{L^{2}_{v},r} \leq C_{r}(T,f^0)\left(\frac{1}{N^{k}} \right), \quad \text{for all } N>N_0,
\end{equation}
where $C_{r}(T,f^0)$ is a constant depending on the truncation parameters $R,L$, dimension $d$,  collision kernel $B$, parameter $r$, prescribed time $T$, and initial condition $f^0(v,z)$.

\end{theorem}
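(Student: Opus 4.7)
The plan is to derive an evolution equation for the error $e_N$, decompose the collision difference by bilinearity into controllable pieces plus the orthogonal projection error, obtain an energy inequality for each fixed $z$ and each $z$-derivative order, and finally apply Gr\"onwall and sum over $|l|\le r$.

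First, since $\mathcal{P}_N$ commutes with $\partial_t$ and with $\partial_z$, differentiating $e_N=\mathcal{P}_N f-f_N$ in time yields
\begin{equation}
\partial_t e_N=\mathcal{P}_N\bigl[Q^{R}(f,f)-Q^{R}(f_N,f_N)\bigr].
\end{equation}
Write $f=f_N+e_N+\eta_N$ with $\eta_N:=(I-\mathcal{P}_N)f$ the projection error, and use bilinearity to split
\begin{equation}
Q^{R}(f,f)-Q^{R}(f_N,f_N)=Q^{R}(e_N+\eta_N,f)+Q^{R}(f_N,e_N+\eta_N).
\end{equation}
For any $|l|\le r$, apply $\partial_z^l$ to the equation and use the Leibniz expansion of $\partial_z^l Q^R$ already handled in \eqref{Q_Leibniz}.

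Next, for each fixed $z\in I_z$, multiply by $\partial_z^l e_N$ and integrate over $\mathcal{D}_L$. Boundedness of $\mathcal{P}_N$ in $L_v^2$ together with Proposition~\ref{QR_Z} and Corollary~\ref{sumQz} gives, after Cauchy--Schwarz,
\begin{equation}
\tfrac{1}{2}\frac{\rd}{\rd t}\|\partial_z^l e_N(t,\cdot,z)\|_{L_v^2}^2\le \bigl\|\partial_z^l\bigl[Q^R(e_N+\eta_N,f)+Q^R(f_N,e_N+\eta_N)\bigr](t,\cdot,z)\bigr\|_{L_v^2}\,\|\partial_z^l e_N(t,\cdot,z)\|_{L_v^2}.
\end{equation}
Using the bilinear estimate \eqref{Q_LpZ} with $p=2$ together with the uniform bounds $\|f_N(t,\cdot,\cdot)\|_{L_v^1,r}\le 2E^{f^0}_r$ and $\|f_N(t,\cdot,\cdot)\|_{L_v^2,r}\le K_{0,r}(T)$ supplied by Theorem~\ref{existencetheorem}, and the analogous bounds on $f$ from the well-posedness of the continuous problem, the right-hand side is controlled by
\begin{equation}
C(T,f^0)\bigl(\|e_N(t,\cdot,z)\|_{L_v^2,r}+\|\eta_N(t,\cdot,z)\|_{L_v^2,r}\bigr)\|\partial_z^l e_N(t,\cdot,z)\|_{L_v^2}.
\end{equation}
Since $\mathcal{P}_N$ commutes with $\partial_z$, $\partial_z^l\eta_N=(I-\mathcal{P}_N)\partial_z^l f$, so the standard spectral projection error on the periodic box $\mathcal{D}_L$ yields
\begin{equation}
\|\partial_z^l\eta_N(t,\cdot,z)\|_{L_v^2}\le \frac{C}{N^k}\,\|\partial_z^l f(t,\cdot,z)\|_{H_v^k}\le \frac{C}{N^k}\,|f(t,\cdot,z)|_{H_v^k,r}\le \frac{C_{k,r}}{N^k},
\end{equation}
uniformly in $z$ and $t\in[0,T]$, thanks to the regularity of $f$ in $H_v^k,r$ recalled at the beginning of this section.

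Finally, dividing by $\|\partial_z^l e_N\|_{L_v^2}$, taking the supremum over $z\in I_z$, and summing over $|l|\le r$, I obtain a scalar differential inequality
\begin{equation}
\frac{\rd}{\rd t}\|e_N(t,\cdot,\cdot)\|_{L_v^2,r}\le A(T)\,\|e_N(t,\cdot,\cdot)\|_{L_v^2,r}+\frac{B(T,f^0)}{N^k},
\end{equation}
with the initial value $\|e_N(0,\cdot,\cdot)\|_{L_v^2,r}=\|\mathcal{P}_N f^0-\mathcal{P}_N f^0\|_{L_v^2,r}=0$ (or, if $f_N^0\ne\mathcal{P}_N f^0$, of order $N^{-k}$ by the same projection estimate). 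Gr\"onwall's lemma then delivers \eqref{final} with $C_r(T,f^0)=\frac{B(T,f^0)}{A(T)}\bigl(\e^{A(T)T}-1\bigr)$.

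The main obstacle I anticipate is the bookkeeping forced by the non-Hilbertian combined norm $\|\cdot\|_{L^2_v,r}=\sum_{|l|\le r}\sup_{z}\|\partial_z^l\cdot\|_{L^2_v}$: one cannot simply write $\tfrac{1}{2}\frac{\rd}{\rd t}\|e_N\|_{L^2_v,r}^2$ and must instead carry out the $L^2_v$ energy estimate pointwise in $z$ and for each multi-index $l$, then insert the Leibniz-expanded mixed terms from $\partial_z^l Q^R$ and control them by previously established lower-order quantities before finally summing. The other delicate point is ensuring that the constants $A(T)$ and $B(T,f^0)$ depend only on quantities already uniformly bounded on $[0,T]$, which is exactly what Theorem~\ref{existencetheorem} and Corollary~\ref{regularity1} provide.
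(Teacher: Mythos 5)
Your proposal is correct and follows essentially the same route as the paper: the same error equation, the same bilinear splitting of $Q^{R}(f,f)-Q^{R}(f_N,f_N)$ into terms involving $f-f_N=e_N+\eta_N$, the same use of the bilinear estimates and the uniform bounds from Theorem~\ref{existencetheorem}, the standard $O(N^{-k})$ projection error, and Gr\"onwall with $e_N(0)=0$. The only cosmetic difference is that you insert the decomposition $f=f_N+e_N+\eta_N$ before applying the bilinear estimate, whereas the paper bounds $\|f-f_N\|_{L^2_v,r}$ by $\|\eta_N\|_{L^2_v,r}+\|e_N\|_{L^2_v,r}$ afterwards; the resulting inequality is identical.
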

\begin{proof}
	To obtain the differential equation satisfied by our defined error function $e_N$, we first need to apply the projection operator $\mP_N$ to the original problem \eqref{ABE}:
	\begin{equation}\label{PUP}
	\left\{
	\begin{array}{lr}  
	\partial_{t} \mathcal{P}_{N}f(t,v,z) = \mathcal{P}_{N}Q^{R}(f,f)(t,v,z),\\[4pt]
	f_{N}(0,v,z) = \mathcal{P}_{N}f(0,v,z), 
	\end{array}
	\right.
	\end{equation}
	Then, by taking subtraction between \eqref{PFS} and \eqref{PUP}, we find that
	\begin{equation}\label{error1}
	\left\{
	\begin{aligned}
	&\partial_{t} e_{N}(t,v,z)= \mathcal{P}_{N} Q^{R}(f,f)(t,v,z) - \mathcal{P}_{N}Q^{R}(f_{N},f_{N})(t,v,z),\\
	& e_{N}(0,v,z) = 0.
	\end{aligned}
	\right.
	\end{equation}
	where the zero initial error comes with the fact that $f_{N}(0,v,z) = \mathcal{P}_{N}f(0,v,z) $ after applying the projection.
	Next, we take $\partial^l_z$ to both hand sides of \eqref{error1} and multiply by $ \partial^l_z e_{N}$, then after integrating over $\D$, taking the supremum over $I_{z}$ and adding up all $|l|\leq r$ altogether, we have
	\begin{equation}
	\begin{split}
	\frac{1}{2} \frac{\rd}{\rd t} \left\| e_N(t,\cdot,\cdot) \right\|^{2}_{L^{2}_{v},r}  =& \sum_{|l|\leq r}\sup_{z\in I_{z}}\int_{\D} \partial^l_z\left[ \mathcal{P}_{N} Q^{R}(f,f) - \mathcal{P}_{N}Q^{R}(f_{N},f_{N})\right](t,v,z) \partial^l_z e_{N}(t,v,z) \,\rd v \\[4pt]
	\leq & C_{r} \left\| \mathcal{P}_{N}\left( Q^{R}(f,f) - Q^{R}(f_{N},f_{N}) \right)(t,\cdot,\cdot)\right\|_{L^{2}_{v},r} \left\|  e_{N}(t,\cdot,\cdot) \right\|_{L^{2}_{v},r}\\[4pt]
    \leq &C_{r}C_{\mP} \left\| \left( Q^{R}(f,f) - Q^{R}(f_{N},f_{N}) \right)(t,\cdot,\cdot)\right\|_{L^{2}_{v},r} \left\|  e_{N}(t,\cdot,\cdot) \right\|_{L^{2}_{v},r}
	\end{split}
	\end{equation}
	which implies that
	\begin{equation}\label{II1}
	\frac{\rd }{\rd t} \left\| e_N(t,\cdot,\cdot) \right\|_{L^{2}_{v},r} \leq C'_{r}\left\| Q^{R}(f,f)(t,\cdot,\cdot) - Q^{R}(f_{N},f_{N})(t,\cdot,\cdot) \right\|_{L^{2}_{v},r}. 
	\end{equation}
	By estimating the right-hand side of the inequality \eqref{II1} above,
	\begin{equation}
	\begin{split}
	&\left\| Q^{R}(f,f)(t,\cdot,\cdot) - Q^{R}(f_{N},f_{N})(t,\cdot,\cdot) \right\|_{L^{2}_{v},r}\\[4pt]
	\leq& \left\|  Q^{R}(f-f_{N},f)(t,\cdot,\cdot) \right\|_{L^{2}_{v},r} + \left\|  Q^{R}(f_{N},f-f_{N})(t,\cdot,\cdot) \right\|_{L^{2}_{v},r}\\[4pt]
	\leq  &C_{R,L,d,2,r}^{''}(B) \left\|  f(t,\cdot,\cdot) - f_{N}(t,\cdot,\cdot) \right\|_{L^{2}_{v},r} \left(\left\|  f(t,\cdot,\cdot)  \right\|_{L^{2}_{v},r} + \left\| f_{N}(t,\cdot,\cdot) \right\|_{L^{2}_{v},r} \right)\\[4pt]
	\leq  &C_{R,L,d,2,r}^{''}(B)\left( C_{0,r}+ K_{0,r}\right) \left\|  f(t,\cdot,\cdot) - f_{N}(t,\cdot,\cdot) \right\|_{L^{2}_{v},r}.
	\end{split}
	\end{equation}
	where the bi-linearity of collision operator $Q^R$ and estimate \eqref{QHKZ||} are utilized in the second inequality, while the well-posedness theorem, as well as the associated estimate of the theoretical solution $f$ (i.e., $C_{0,r}$) and numerical counterpart $f_N$ (i.e., $K_{0,r}$), are applied in the last inequality above. Furthermore, we also have,
	\begin{equation}
	\begin{split}
	\left\|  f(t,\cdot,\cdot) - f_{N}(t,\cdot,\cdot) \right\|_{L^{2}_{v},r} 
	=& \left\|  f(t,\cdot,\cdot) - \mathcal{P}_{N}f(t,\cdot,\cdot) + \mathcal{P}_{N}f(t,\cdot,\cdot) - f_{N}(t,\cdot,\cdot) \right\|_{L^{2}_{v},r} \\[4pt]
	\leq & \left\|  f(t,\cdot,\cdot) - \mathcal{P}_{N}f(t,\cdot,\cdot) \right\|_{L^{2}_{v},r} +  \left\|  \mathcal{P}_{N}f(t,\cdot,\cdot)- f_{N}(t,\cdot,\cdot) \right\|_{L^{2}_{v},r}\\[4pt]
	\leq & \left\|  f(t,\cdot,\cdot) - \mathcal{P}_{N}f(t,\cdot,\cdot)  \right\|_{L^{2}_{v},r} + \left\|e_{N}(t,\cdot,\cdot) \right\|_{L^{2}_{v},r}\\[4pt]
	\leq & \frac{C''_{r} \|f(t,\cdot,\cdot)\|_{H^{k}_{v},r}}{N^{k}}+ \left\|e_{N}(t,\cdot,\cdot) \right\|_{L^{2}_{v},r}.
	\end{split}
	\end{equation}
Therefore, we have
	\begin{equation}
	\frac{\rd}{\rd t} \left\| e_N(t,\cdot,\cdot) \right\|_{L^{2}_{v},r} \leq C'_{r}C_{R,L,d,2,r}^{''}(B)\left( C_{0,r}+ K_{0,r} \right) \left[ \left\|e_{N}(t,\cdot,\cdot) \right\|_{L^{2}_{v},r}+\left(\frac{C''_rK_{k,r}}{N^{k}} \right) \right],
	\end{equation}
	which implies
	\begin{equation}
	\forall t\in [0,T], \quad \left\| e_N(t,\cdot,\cdot) \right\|_{L^{2}_{v},r} \leq \e^{C'_{r}C_{R,L,d,2,r}^{''}(B)\left( C_{0,r}+ K_{0,r} \right)T}\left[\left\|e_{N}(0,\cdot,\cdot) \right\|_{L^{2}_{v},r} + \left(\frac{C''_rK_{k,r}}{N^{k}} \right) \right].
	\end{equation}
	Since $e_N(0,v,z)\equiv 0$, if we denote the constant $C_{r}(T,f^0):=\e^{C'_{r}C_{R,L,d,2,r}^{''}(B)\left( C_{0,r}+ K_{0,r} \right)T}C''_rK_{k,r}$, the desired result in \eqref{final} can be finally obtained.
\end{proof}

\section{Conclusion}
\label{sec:con}

In this paper, the Fourier-Galerkin spectral method is shown to be convergent with spectral accuracy for the homogeneous Boltzmann equation with uncertainties. We develop a brand new Sobolev space and associated norm to handle the impact of the random variables arising from the collision kernel and initial condition. In particular, we prove the well-posedness of the numerical solution obtained by the Fourier-Galerkin spectral method in our newly-established space, where the effect of the high-order regularity of the random variable is quantified as well. This paper can be regarded as an initial attempt and foundation that prepares us to study the convergence of the semi-discretized system, where the velocity and random variables are discretized simultaneously by using the Fourier-Galerkin spectral and gPC-SG method respectively. In addition, as future work, we can also study the more complicated multi-species system with random inputs and conduct the regularity and convergence analysis for numerical approximations.


\section*{Acknowledgement}
\label{sec:ack}
L.~Liu acknowledges the support by National Key R\&D Program of China (2021YFA1001200), Ministry of Science and Technology in China, Early Career Scheme (24301021) and General Research Fund (14303022 \& 14301423) funded by Research Grants Council of Hong Kong from 2021-2023. K.~Qi is supported by grants from the School of Mathematics, University of Minnesota and thanks the hospitality from the Department of Mathematics, The Chinese University of Hong Kong, where part of this work was completed.
	

\vskip2mm

\par{\bf Appendix.}\, 

\setcounter{equation}{0}
\renewcommand{\theequation}{A.\arabic{equation}}
\renewcommand{\thesubsection}{A.\arabic{subsection}}	

\subsection{Estimation of the collision operator \texorpdfstring{$Q^{R}$}{QR} for fixed random variable}\label{app_estimate_Q}

In this appendix, we recall the estimation of the collision operator $Q^{R}$ for any fixed random variable $z \in I_{z}$ without considering its high-order derivative, which is literally the classical estimation of $Q^{R}$ in the deterministic case as in \cite[Proposition 3.1]{HQY21}, except that the random variable $z \in I_{z}$ should be explicitly indicated.

\begin{proposition}\label{QR_1}
Let the truncation parameters $R$, $L$ satisfy \eqref{RL}, and assume that the collision kernel $B$ satisfy \eqref{kernel}--\eqref{cutoff} and  \eqref{kinetic}--\eqref{Assump_B}, then for $1\leq p \leq \infty$, the truncated collision operators $ Q^{R,\pm}(g,f)$ satisfy: for all $z \in I_z$, 
	\begin{equation}\label{QGLp1}
	\left\|Q^{R,+}(g,f)(\cdot, z)\right\|_{L_v^{p}(\D)} \leq C^+_{R,L,d,p}(B) \left\|g(\cdot, z)\right\|_{L_v^{1}(\D)} \left\|f(\cdot, z) \right\|_{L_v^{p}(\D)},
	\end{equation}
where the constant 
 $\displaystyle C^+_{R,L,d,p}(B)=C^{1/p}\sup_{z\in I_z}\|b(\cdot, z)\|_{L^1(\mathbb{S}^{d-1})}\|\mathbf{1}_{|v|\leq R} \Phi(|v|)\|_{L_v^{\infty}{(\D)}}$.
	\begin{equation}\label{QLLp}
	\left\|Q^{R,-}(g,f)(\cdot, z)\right\|_{L_v^{p}(\D)} \leq C_{R,L,d}^-(B) \left\|g(\cdot, z)\right\|_{L_v^{1}(\D)} \left\|f(\cdot, z)\right\|_{L_v^{p}(\D)},
	\end{equation}
	where the constant
	$\displaystyle C^-_{R,L,d}(B)=C \sup_{z\in I_z}\|b(\cdot, z)\|_{L^1(\mathbb{S}^{d-1})} \left \| \mathbf{1}_{|v|\leq R}\Phi(|v|)\right\|_{L^{\infty}(\D)}$.
In particular, for the whole collision operator $ Q^{R}(g,f)$, we have, for all $z \in I_z$,
	\begin{equation}\label{QLp}
	\left\|Q^{R}(g,f)(\cdot, z)\right\|_{L_v^{p}(\D)} \leq C_{R,L,d,p}(B) \left\|g(\cdot, z)\right\|_{L_v^{1}(\D)} \left\|f(\cdot, z) \right\|_{L_v^{p}(\D)}.
	\end{equation}	
\end{proposition}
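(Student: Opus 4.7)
\textbf{Proof proposal for Proposition \ref{QR_1}.} The plan is to treat the loss and gain terms separately and then combine them via the triangle inequality. Throughout, the random variable $z\in I_z$ is treated as a fixed parameter; the uncertainty enters only through $b(\sigma\cdot\hat q,z)$, so the estimates reduce to the deterministic ones up to replacing $\|b\|_{L^1(\mathbb{S}^{d-1})}$ by $\sup_{z\in I_z}\|b(\cdot,z)\|_{L^1(\mathbb{S}^{d-1})}$, which is finite by the cutoff assumption \eqref{cutoff} combined with \eqref{Assump_B}.

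For the loss term, I would start by exploiting the factorization
\begin{equation*}
Q^{R,-}(g,f)(v,z)=f(v,z)\,L^R[g](v,z),\qquad L^R[g](v,z):=\int_{\bR^d}\int_{\Sd^{d-1}}\mathbf{1}_{|q|\leq R}\,\Phi(|q|)\,b(\sigma\cdot\hat q,z)\,g(v-q,z)\,\rd\sigma\,\rd q.
\end{equation*}
Because the sphere measure is rotation-invariant, $\int_{\Sd^{d-1}} b(\sigma\cdot\hat q,z)\,\rd\sigma$ is independent of $\hat q$ and equals $\|b(\cdot,z)\|_{L^1(\Sd^{d-1})}$, so $L^R[g](\cdot,z)$ reduces to the convolution $\|b(\cdot,z)\|_{L^1}\,\bigl[(\mathbf{1}_{|q|\leq R}\Phi)\ast g(\cdot,z)\bigr]$. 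Using $L^\infty\ast L^1\to L^\infty$ on the periodic domain $\D$ (where one needs the elementary observation that the effective integration region is contained in $\mathcal{B}_{\sqrt 2 L+R}$ and can be dominated by a dimension-dependent constant times $\|g(\cdot,z)\|_{L^1(\D)}$ by periodicity), I obtain $\|L^R[g](\cdot,z)\|_{L^\infty_v(\D)}\leq C\,\|b(\cdot,z)\|_{L^1}\,\|\mathbf{1}_{|q|\leq R}\Phi\|_{L^\infty}\,\|g(\cdot,z)\|_{L^1(\D)}$. Hölder then yields \eqref{QLLp}.

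The gain term \eqref{QGLp1} is the main technical step. I would proceed by the change of variable $v_*\mapsto q=v-v_*$ in \eqref{Q_minus}, followed by the pre-/post-collisional swap $\sigma\leftrightarrow\hat q$ in the gain contribution (legitimate because of the symmetrization \eqref{angular}). This recasts $Q^{R,+}(g,f)(v,z)$ as an integral over $\mathcal{B}_R\times\Sd^{d-1}$ with integrand $\Phi(|q|)b(\sigma\cdot\hat q,z)\,g(v'_*,z)f(v',z)$, where $v',v'_*$ now depend affinely on $v,q,\sigma$ and remain in a bounded neighborhood of $v$ of size $R$. From here I would invoke the Carleman-type representation used in \cite[Theorem 2.1]{MV04} (which underlies \cite[Proposition 3.1]{HQY21}): after a measure-preserving change of variables, $Q^{R,+}(g,f)(v,z)$ admits a bilinear convolution-type expression in $g$ and $f$ with a kernel uniformly bounded by $\|\mathbf{1}_{|q|\leq R}\Phi\|_{L^\infty}\,\|b(\cdot,z)\|_{L^1(\Sd^{d-1})}$ on the truncation region. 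Young's inequality in the convolution structure then delivers $\|Q^{R,+}(g,f)(\cdot,z)\|_{L^p_v(\D)}\leq C^{+}_{R,L,d,p}(B)\|g(\cdot,z)\|_{L^1_v}\|f(\cdot,z)\|_{L^p_v}$, with the stated constant. The factor $C^{1/p}$ in $C^{+}_{R,L,d,p}(B)$ arises from interpolating between the $p=1$ case (where the total-mass identity gives the sharp constant) and the $p=\infty$ case via the Riesz-Thorin theorem.

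The main obstacle is the gain-term representation: one has to verify that the change of variables survives the truncation $\mathbf{1}_{|q|\leq R}$ and the periodization on $\D$, and that the resulting kernel is integrable uniformly in $z$. Once both $Q^{R,\pm}$ bounds are established, the combined estimate \eqref{QLp} follows from $Q^R=Q^{R,+}-Q^{R,-}$ and the triangle inequality, with $C_{R,L,d,p}(B):=C^{+}_{R,L,d,p}(B)+C^{-}_{R,L,d}(B)$. The uniformity in $z$ is inherited throughout by taking $\sup_{z\in I_z}\|b(\cdot,z)\|_{L^1(\Sd^{d-1})}$, which is how the constants in the statement are defined.
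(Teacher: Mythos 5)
Your proposal is correct and follows essentially the same route as the paper: the paper gives no independent proof here, but simply defers to the deterministic estimate of \cite[Proposition 3.1]{HQY21} (itself based on \cite[Theorem 2.1]{MV04}), remarking that $z$ enters only as a parameter and that the constants become uniform in $z$ via $\sup_{z\in I_z}\|b(\cdot,z)\|_{L^1(\mathbb{S}^{d-1})}$, which is finite by \eqref{cutoff} and \eqref{Assump_B}. Your reconstruction of that deterministic argument (loss-term factorization plus periodized convolution bound, gain-term change of variables with Young's inequality and Riesz--Thorin interpolation giving the $C^{1/p}$ factor, then the triangle inequality) matches the intended proof, including its treatment of the periodization constant and the uniformity in $z$.
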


\subsection{Local well-posedness of \texorpdfstring{$\partial_z^l f_N$}{zlfN}}\label{app_local_g}

As mentioned in Section \ref{subsec:wellposed}, the well-posedness of $f_{N}$ is obtained under our newly defined Sobolev space where each order of $z$-derivative is added up to $r$. Nevertheless, one can also establish the local existence and uniqueness results for each $z$-derivatives of $\fe$, that is, the local well-posedness of $\partial_z^l \fe$ for all $|l|\leq r$ in the $L_{v,z}^{p,\infty}$ space.

Denote $\gn(t,v,z) := \partial_z^l \fe(t,v,z)$ with $|l|\leq r$ (for notation simplicity, we omit the $l$-dependence in $\gn$). Take $l$-th order $z$-derivative on both sides of $\partial_t \fe = \mathcal{P}_N Q^R(\fe,\fe)$, then $g_{N}$ satisfies the numerical system
\begin{equation}\label{eqn-g} 
	\partial_t \gn(t,v,z) = \mathcal{P}_N \partial_z^l Q^R(\fe,\fe)(t,v,z). 
\end{equation}

\begin{proposition}\label{localexistence2}
Let the truncation parameters $R$, $L$ satisfy \eqref{RL}, and assume that the collision kernel $B$ satisfy \eqref{kernel}--\eqref{cutoff} and \eqref{kinetic}--\eqref{Assump_B}. 
Assume that the initial condition $g^{0}(v,z) = \partial^l_z f^{0}(v,z)$ belongs to $L_{v}^{1}(\D) \cap L_{v}^{2}(\D)$ for all $z \in I_z$. 
For the numerical system \eqref{eqn-g}, according to \eqref{fN_IC} assume that we evolve $\gn$ from a certain time $t_0$, where it satisfies
\begin{equation}
    \forall z \in I_z, \quad \|\gn(t_0,\cdot,\cdot)\|_{L_{v}^{1}} \leq 2 E_0^{g^0}, \qquad \|\gn(t_0,\cdot,\cdot)\|_{L_{v}^{2}} \leq  K_{0,l}.
	\end{equation}
Then, for all $z \in I_z$ there exists a local time $\tau$, such that \eqref{eqn-g} admits a unique solution $ \gn(t,\cdot,\cdot) \in L_{v}^{1} \cap L_{v}^{2} $ on $ [t_0,t_0+\tau]$.
\end{proposition}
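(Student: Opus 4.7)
The plan is to proceed by induction on $|l|$, mirroring the Banach fixed point argument of Proposition \ref{localexistence}, with the crucial simplification that equation \eqref{eqn-g} is \emph{linear} in $\gn$ once $\fe$ and its lower-order $z$-derivatives are regarded as known data. The base case $|l|=0$ is exactly Proposition \ref{localexistence}. For the inductive step, assume the conclusion holds for every multi-index of order strictly less than $|l|$; combined with Proposition \ref{localexistence}, this supplies a common local time $\tau_0>0$ on which $\fe$ and all $\partial_z^m \fe$ with $|m|\leq |l|-1$ exist and are bounded in $L^1_v(\D)\cap L^2_v(\D)$, uniformly in $z\in I_z$, by some constant $M_{l-1}$.

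First apply the Leibniz decomposition \eqref{Q_Leibniz} with $h=f=\fe$ to split
\begin{equation*}
\partial_z^l Q^R(\fe,\fe) \;=\; Q^R(\gn,\fe) \;+\; Q^R(\fe,\gn) \;+\; \mathcal{R}_l(t,v,z),
\end{equation*}
where the remainder $\mathcal{R}_l$ collects all summands containing only $\partial_z^m \fe$ with $|m|<|l|$ together with $z$-derivatives of $B_T$ of order $\leq |l|$. By the inductive hypothesis, assumption \eqref{Assump_B}, and the estimate \eqref{Q_BZ}, $\mathcal{R}_l(s,\cdot,z)$ is bounded in $L^2_v(\D)$ on $[t_0,t_0+\tau_0]$ uniformly in $z\in I_z$ by a constant $M_l^{\mathcal{R}}$ depending only on $M_{l-1}$, $R$, $L$, $d$ and $B$. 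Fix $z \in I_z$ and, for to-be-chosen constants $\bar{E}_l,\bar{D}_l>0$ and $\tau\in(0,\tau_0]$, introduce the complete metric space
\begin{equation*}
\chi_l(z) = \Bigl\{ g\in L^\infty([t_0,t_0+\tau];\,L^1_v\cap L^2_v(\D)) \,:\, \sup_{t}\|g(t,\cdot,z)\|_{L^1_v}\leq \bar{E}_l,\ \sup_{t}\|g(t,\cdot,z)\|_{L^2_v}\leq \bar{D}_l \Bigr\}
\end{equation*}
equipped with $d(g,\tilde g) = \sup_t \|g(t,\cdot,z)-\tilde g(t,\cdot,z)\|_{L^2_v(\D)}$, and the integral operator
\begin{equation*}
\Phi_l[g](t,v,z) = \gn(t_0,v,z) + \int_{t_0}^t \mP_N\bigl[ Q^R(g,\fe) + Q^R(\fe,g) + \mathcal{R}_l \bigr](s,v,z)\,\rd s.
\end{equation*}
Using the bi-linear estimates \eqref{QGLp1}--\eqref{QLp} of Proposition \ref{QR_1} together with the $L^1_v\cap L^2_v$ bounds on $\fe$ from Proposition \ref{localexistence}, a direct calculation parallel to Steps (I)--(II) in the proof of Proposition \ref{localexistence} shows that choosing $\bar{E}_l$ and $\bar{D}_l$ slightly larger than $\|\gn(t_0,\cdot,\cdot)\|_{L^1_v}$ and $\|\gn(t_0,\cdot,\cdot)\|_{L^2_v}$, and $\tau$ of the form \eqref{tau} with $\bar{C}_1,\bar{C}_2$ now proportional to $\sup_t \|\fe(t,\cdot,z)\|_{L^1_v\cap L^2_v}$ and with an additive correction accounting for $\tau M_l^{\mathcal{R}}$, renders $\Phi_l$ simultaneously a self-map and a strict contraction on $\chi_l(z)$. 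The Banach fixed point theorem then produces a unique $\gn(\cdot,\cdot,z)\in L^1_v\cap L^2_v$ on $[t_0,t_0+\tau]$, and since all constants entering the choice of $\tau$ are independent of $z$ (thanks to the uniform bounds assumed on $\fe$ and the inductive bound on $\mathcal{R}_l$), the local time can be taken uniform in $z\in I_z$.

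The principal obstacle is not the Picard iteration itself---which is \emph{easier} than the nonlinear argument of Proposition \ref{localexistence} because $\partial_z^l Q^R$ is linear in $\gn$ modulo the known forcing $\mathcal{R}_l$---but the combinatorial bookkeeping needed to verify the uniform-in-$z$ $L^2_v$ bound on $\mathcal{R}_l$. One must expand $\partial_z^l Q^R(\fe,\fe)$ exactly as in \eqref{Q_Leibniz}, isolate every triple $(n,m,l)$ with $\max(m,n-m)<|l|$, apply \eqref{Q_BZ} term-by-term with the appropriate pair $(\partial_z^m \fe,\partial_z^{n-m}\fe)$, and then sum over the finitely many multi-indices using the inductive bound $M_{l-1}$. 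Once this accounting is carried out, the rest of the argument is a verbatim repetition of the self-map and contraction estimates from the proof of Proposition \ref{localexistence}.
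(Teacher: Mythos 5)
Your proposal is correct and follows essentially the same route as the paper: both exploit the fact that, via the Leibniz expansion \eqref{Q_Leibniz}, the equation for $\gn$ is affine in $\gn$ with the lower-order terms acting as known forcing, and both close the argument with the bilinear estimates of Proposition \ref{QR_1} and the $L^1_v\cap L^2_v$ bounds on $\fe$ from Proposition \ref{localexistence}. The only cosmetic difference is that the paper runs an explicit Picard iteration and shows the iterates form a Cauchy sequence (the lower-order terms cancel in successive differences), whereas you invoke the Banach fixed point theorem for the affine integral operator and are somewhat more explicit about the self-map step via the inductive bound on the remainder $\mathcal{R}_l$.
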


\begin{proof}
Given $\bar E, \bar D > 0$ and small enough time $\tau >0$ to be specified later on, we define the space $\chi$ by
\begin{equation*}
\begin{split}
    \chi= \Big\lbrace \gn \in L^{\infty}\big([t_0,t_0+\tau]; L^{1}_{v}(\D) \cap L^{2}_{v}(\D)\big), \forall z \in I_z, &\sup\limits_{t\in [t_0,t_0+\tau]}\left\| \gn(t,\cdot,z) \right\|_{L^{1}_{v}} \leq \bar E,\\
 &\sup\limits_{t\in [t_0,t_0+\tau]}\left\| \gn(t,\cdot,z) \right\|_{L^{2}_{v}} \leq \bar D \Big \rbrace.
\end{split} 
\end{equation*}
Consider the following iteration on the equation \eqref{eqn-g} with $p \in \mathbb N$: 
\begin{equation}\label{g-iter}
\partial_t \gn^{(p+1)}(t, v, z)  = \mathcal{P}_N \partial_z^n Q^{R,{(p)}}(\fe, \fe)(t, v, z) , 
\end{equation}
where 
\begin{equation}
\begin{split}\label{Q-iter}
\partial_z^n Q^{R,(p)}(\fe, \fe)(t, v, z) : = & \sum_{|l|=0}^{|n|-1}\sum_{|m|=0}^{|l|} \binom{n}{l}\binom{l}{m} Q_{B^{n-l}}^R(\partial_z^m \fe, \partial_z^{l-m}\fe)(t, v, z)  \\[4pt]
& + \sum_{|m|=1}^{|n|-1}\binom{n}{m}Q^R(\partial_z^m \fe, \partial_z^{n-m}\fe) (t, v, z) \\
\end{split}
\end{equation}
using the same calculation as \eqref{Q_Leibniz}. Also, note that the first two terms in the RHS of \eqref{Q-iter} do not involve the time iteration index $p$ of the scheme. We aim to show that 
$\{ \gn^{(p)} \}_{p\in\mathbb{N}}$ is a Cauchy sequence in the space $\chi$. 

Let $\gn^{(0)}(t, v, z)=0$ and $\gn^{(p)}(t, v, z)$ all share the same initial datum as $\gn(t_0, v, z)$ for $p \in \mathbb{N}$. For all $ t\in [t_0, t_0+\tau]$ and all $z\in I_z$, the iteration system \eqref{g-iter} becomes 
$$ \gn^{(p+1)} (t, v, z) = \gn^{(p+1)}(t_0, v, z) + \int_{t_0}^t \mathcal{P}_N \partial_z^n Q^R(\fe, \fe)^{(p)}(s,v,z)\, \rd s. $$
Taking the $L_v^2$ norm on both sides, one has, for all $z \in I_z$
\small{
\begin{equation}\label{g-iter2}
\begin{split}
&\left\| \gn^{(p+1)}(t,\cdot,z) - \gn^{(p)}(t,\cdot,z) \right\|_{L^2_v} \\
\leq & \int_{t_0}^t \Big\| \left[\mathcal{P}_N \left( Q^R(\fe, \gn^{(p)}) + Q^R(\gn^{(p)}, \fe)\right)  - \mathcal{P}_N \left( Q^R(\fe, \gn^{(p-1)}) + Q^R(\gn^{(p-1)}, \fe)\right)\right](t,\cdot,z) \Big\|_{L^2_v}\, \rd s \\[4pt]
\leq & \int_{t_0}^t \underbrace{\left\| \left[Q^R(\fe, \gn^{(p)}) - Q^R(\fe, \gn^{(p-1)}) + Q^R(\gn^{(p)}, \fe) - Q^R(\gn^{(p-1)}, \fe)\right](t,\cdot,z) \right\|_{L^2_v}}_{\text{Term I}} \,\rd s . 
\end{split}
\end{equation}
}
By the estimate \eqref{QLp} in Proposition \ref{QR_1}, hence 
\begin{equation}\label{TermI}
\begin{split}
\text{Term I} & \leq \left\| Q^R(\fe, \gn^{(p)})(t,\cdot,z) - Q^R(\fe, \gn^{(p-1)})(t,\cdot,z) \right\|_{L^2_v}\\
&\qquad \qquad \qquad \qquad \qquad \qquad \qquad + \left\| Q^R(\gn^{(p)}, \fe)(t,\cdot,z) - Q^R(\gn^{(p-1)}, \fe)(t,\cdot,z)\right\|_{L^2_v} \\[4pt]
& = \left\| Q^R(\fe, \gn^{(p)}-\gn^{(p-1)})(t,\cdot,z) \right\|_{L^2_v} + \left\| Q^R(\gn^{(p)}-\gn^{(p-1)}, \fe)(t,\cdot,z) \right\|_{L^2_v} \\[4pt]
& \leq C_{R,L,d,2}(B) \Big( \left\| \fe(t,\cdot,z) \right\|_{L^1_v} \left\| \gn^{(p)}(t,\cdot,z)-\gn^{(p-1)}(t,\cdot,z) \right\|_{L^2_v}\\
&\qquad \qquad \qquad \qquad \qquad \qquad \qquad + \left\| \gn^{(p)}(t,\cdot,z)-\gn^{(p-1)}(t,\cdot,z)\right\|_{L^1_v}  \left\| \fe(t,\cdot,z) \right\|_{L^2_v}\Big)  \\[4pt]
& \leq C_{R,L,d,2}(B) \Big( \left\| \fe(t,\cdot,z) \right\|_{L^1_v} \left\| \gn^{(p)}(t,\cdot,z)-\gn^{(p-1)}(t,\cdot,z) \right\|_{L^2_v}\\
&\qquad \qquad \qquad \qquad \qquad \qquad \qquad + (2L)^{d/2} \left\| \gn^{(p)}(t,\cdot,z)-\gn^{(p-1)}(t,\cdot,z)\right\|_{L^2_v}  \left\| \fe(t,\cdot,z) \right\|_{L^2_v}\Big). 
\end{split}
\end{equation}
In Proposition \ref{localexistence}, we have already shown that there exists a unique solution $\fe(t,\cdot, z) \in L_v^1 \cap L_v^2(\mathcal{D}_L)$ on the small time interval $[t_0, t_0+\tau]$
and satisfies $\left\| \fe (t,\cdot, \cdot)\right\|_{L^1_v}\leq \bar E$ and $\left\| \fe (t,\cdot, \cdot)\right\|_{L^2_v}\leq \bar D$. Hence, \eqref{TermI} gives, for all $z \in I_z$,
\begin{equation}\label{TermI-2}
\begin{split}
\text{Term I} & \leq C_{R,L,d,2}(B) \Big( \bar E \left\| \gn^{(p)}(t,\cdot,z)-\gn^{(p-1)}(t,\cdot,z)\right\|_{L^2_v} + (2L)^{d/2}\, \bar D \left\| \gn^{(p)}(t,\cdot,z)-\gn^{(p-1)}(t,\cdot,z)\right\|_{L^2_v} \Big) \\[4pt]
& = \left( C_{R,L,d,2}(B)\, \bar E + (2L)^{d/2}\, C_{R,L,d,2}(B)\, \bar D \right) \left\| \gn^{(p)}(t,\cdot,z) - \gn^{(p-1)}(t,\cdot,z) \right\|_{L^2_v}. 
\end{split}
\end{equation}
Therefore, for $t\in [t_0, t_0+\tau]$ and all $z \in I_z$, by \eqref{g-iter2} and \eqref{TermI-2}, one has, for all $z \in I_z$,
\begin{multline*}
 \left\| \gn^{(p+1)}(t,\cdot,z)-\gn^{(p)}(t,\cdot,z)\right\|_{L^2_v} \\
 \leq 
 \tau\, C_{R,L,d,2}(B) \left(\bar E +  (2L)^{d/2}\, \bar D\right) \sup_{t\in[t_0, t_0+\tau]} \left\| \gn^{(p)}(t,\cdot,z)-\gn^{(p-1)}(t,\cdot,z)\right\|_{L^2_v}. 
\end{multline*}
By letting $\tau\, C_{R,L,d,2}(B) \left( \bar E + (2L)^{d/2}\, \bar D \right) < 1$, we obtain that $\{ \gn^{(p)} \}_{p\in \mathbb{N}}$ is a Cauchy sequence in $\chi$. Thus $\{ \gn^{(p)} \}_{p\in \mathbb{N}}$ converges to a function $\gn$ solved by \eqref{eqn-g}. 
\end{proof}


\vskip2mm



\bibliographystyle{amsplain}
\bibliography{Qi_bibtex}








\end{document}